\begin{document}  

\newcommand{\nc}{\newcommand}
\newcommand{\delete}[1]{}
\nc{\dfootnote}[1]{{}}          
\nc{\ffootnote}[1]{\dfootnote{#1}}
\nc{\mfootnote}[1]{\footnote{#1}} 
\nc{\todo}[1]{\tred{To do:} #1}

\nc{\mlabel}[1]{\label{#1}}  
\nc{\mcite}[1]{\cite{#1}}  
\nc{\mref}[1]{\ref{#1}}  
\nc{\mbibitem}[1]{\bibitem{#1}} 

\delete{
\nc{\mlabel}[1]{\label{#1}  
{\hfill \hspace{1cm}{\bf{{\ }\hfill(#1)}}}}
\nc{\mcite}[1]{\cite{#1}{{\bf{{\ }(#1)}}}}  
\nc{\mref}[1]{\ref{#1}{{\bf{{\ }(#1)}}}}  
\nc{\mbibitem}[1]{\bibitem[\bf #1]{#1}} 
}

\nc{\mkeep}[1]{\marginpar{{\bf #1}}} 

\newtheorem{theorem}{Theorem}[section]
\newtheorem{prop}[theorem]{Proposition}
\newtheorem{defn}[theorem]{Definition}
\newtheorem{lemma}[theorem]{Lemma}
\newtheorem{coro}[theorem]{Corollary}
\newtheorem{prop-def}[theorem]{Proposition-Definition}
\newtheorem{claim}{Claim}[section]
\newtheorem{remark}[theorem]{Remark}
\newtheorem{propprop}{Proposed Proposition}[section]
\newtheorem{conjecture}{Conjecture}
\newtheorem{exam}[theorem]{Example}
\newtheorem{assumption}{Assumption}
\newtheorem{condition}[theorem]{Assumption}
\newtheorem{question}[theorem]{Question}

\renewcommand{\labelenumi}{{\rm(\alph{enumi})}}
\renewcommand{\theenumi}{\alph{enumi}}

\nc{\tred}[1]{\textcolor{red}{#1}}
\nc{\tblue}[1]{\textcolor{blue}{#1}}
\nc{\tgreen}[1]{\textcolor{green}{#1}}
\nc{\tpurple}[1]{\textcolor{purple}{#1}}
\nc{\btred}[1]{\textcolor{red}{\bf #1}}
\nc{\btblue}[1]{\textcolor{blue}{\bf #1}}
\nc{\btgreen}[1]{\textcolor{green}{\bf #1}}
\nc{\btpurple}[1]{\textcolor{purple}{\bf #1}}

\nc{\li}[1]{\textcolor{red}{Li:#1}}
\nc{\cm}[1]{\textcolor{blue}{Chengming: #1}}
\nc{\xiang}[1]{\textcolor{green}{Xiang: #1}}


\nc{\oaset}{\mathbf{O}^{\rm alg}}
\nc{\omset}{\mathbf{O}^{\rm mod}}
\nc{\oamap}{\Phi^{\rm alg}}
\nc{\ommap}{\Phi^{\rm mod}}
\nc{\ioaset}{\mathbf{IO}^{\rm alg}}
\nc{\iomset}{\mathbf{IO}^{\rm mod}}
\nc{\ioamap}{\Psi^{\rm alg}}
\nc{\iommap}{\Psi^{\rm mod}}

\nc{\bwt}{{mass}\xspace}
\nc{\bwts}{{masses}\xspace} \nc{\bop}{{extention}\xspace}
\nc{\ewt}{{mass}\xspace} \nc{\ewts}{{masses}\xspace}
\nc{\tto}{{extended}\xspace} \nc{\Tto}{{Extended}\xspace}
\nc{\tte}{{extended}\xspace} \nc{\gyb}{{generalized}\xspace}
\nc{\Gyb}{{Generalized}\xspace} \nc{\ECYBE}{{ECYBE}\xspace}
\nc{\GAYBE}{{GAYBE}\xspace}

\nc{\adec}{\check{;}}
\nc{\aop}{\alpha}
\nc{\dftimes}{\widetilde{\otimes}} \nc{\dfl}{\succ}
\nc{\dfr}{\prec} \nc{\dfc}{\circ} \nc{\dfb}{\bullet}
\nc{\dft}{\star} \nc{\dfcf}{{\mathbf k}}
\nc{\apr}{\ast}
\nc{\spr}{\cdot}
\nc{\twopr}{\circ}
\nc{\tspr}{\star}
\nc{\sempr}{\ast}
\nc{\disp}[1]{\displaystyle{#1}}
\nc{\bin}[2]{ (_{\stackrel{\scs{#1}}{\scs{#2}}})}  
\nc{\binc}[2]{ \left (\!\! \begin{array}{c} \scs{#1}\\
    \scs{#2} \end{array}\!\! \right )}  
\nc{\bincc}[2]{  \left ( {\scs{#1} \atop
    \vspace{-.5cm}\scs{#2}} \right )}  
\nc{\sarray}[2]{\begin{array}{c}#1 \vspace{.1cm}\\ \hline
    \vspace{-.35cm} \\ #2 \end{array}}
\nc{\bs}{\bar{S}} \nc{\dcup}{\stackrel{\bullet}{\cup}}
\nc{\dbigcup}{\stackrel{\bullet}{\bigcup}} \nc{\etree}{\big |}
\nc{\la}{\longrightarrow} \nc{\fe}{\'{e}} \nc{\rar}{\rightarrow}
\nc{\dar}{\downarrow} \nc{\dap}[1]{\downarrow
\rlap{$\scriptstyle{#1}$}} \nc{\uap}[1]{\uparrow
\rlap{$\scriptstyle{#1}$}} \nc{\defeq}{\stackrel{\rm def}{=}}
\nc{\dis}[1]{\displaystyle{#1}} \nc{\dotcup}{\,
\displaystyle{\bigcup^\bullet}\ } \nc{\sdotcup}{\tiny{
\displaystyle{\bigcup^\bullet}\ }} \nc{\hcm}{\ \hat{,}\ }
\nc{\hcirc}{\hat{\circ}} \nc{\hts}{\hat{\shpr}}
\nc{\lts}{\stackrel{\leftarrow}{\shpr}}
\nc{\rts}{\stackrel{\rightarrow}{\shpr}} \nc{\lleft}{[}
\nc{\lright}{]} \nc{\uni}[1]{\tilde{#1}} \nc{\wor}[1]{\check{#1}}
\nc{\free}[1]{\bar{#1}} \nc{\den}[1]{\check{#1}} \nc{\lrpa}{\wr}
\nc{\curlyl}{\left \{ \begin{array}{c} {} \\ {} \end{array}
    \right .  \!\!\!\!\!\!\!}
\nc{\curlyr}{ \!\!\!\!\!\!\!
    \left . \begin{array}{c} {} \\ {} \end{array}
    \right \} }
\nc{\leaf}{\ell}       
\nc{\longmid}{\left | \begin{array}{c} {} \\ {} \end{array}
    \right . \!\!\!\!\!\!\!}
\nc{\ot}{\otimes} \nc{\sot}{{\scriptstyle{\ot}}}
\nc{\otm}{\overline{\ot}}
\nc{\ora}[1]{\stackrel{#1}{\rar}}
\nc{\ola}[1]{\stackrel{#1}{\la}}
\nc{\pltree}{\calt^\pl}
\nc{\epltree}{\calt^{\pl,\NC}}
\nc{\rbpltree}{\calt^r}
\nc{\scs}[1]{\scriptstyle{#1}} \nc{\mrm}[1]{{\rm #1}}
\nc{\dirlim}{\displaystyle{\lim_{\longrightarrow}}\,}
\nc{\invlim}{\displaystyle{\lim_{\longleftarrow}}\,}
\nc{\mvp}{\vspace{0.5cm}} \nc{\svp}{\vspace{2cm}}
\nc{\vp}{\vspace{8cm}} \nc{\proofbegin}{\noindent{\bf Proof: }}
\nc{\proofend}{$\blacksquare$ \vspace{0.5cm}}
\nc{\freerbpl}{{F^{\mathrm RBPL}}}
\nc{\sha}{{\mbox{\cyr X}}}  
\nc{\ncsha}{{\mbox{\cyr X}^{\mathrm NC}}} \nc{\ncshao}{{\mbox{\cyr
X}^{\mathrm NC,\,0}}}
\nc{\shpr}{\diamond}    
\nc{\shprm}{\overline{\diamond}}    
\nc{\shpro}{\diamond^0}    
\nc{\shprr}{\diamond^r}     
\nc{\shpra}{\overline{\diamond}^r}
\nc{\shpru}{\check{\diamond}} \nc{\catpr}{\diamond_l}
\nc{\rcatpr}{\diamond_r} \nc{\lapr}{\diamond_a}
\nc{\sqcupm}{\ot}
\nc{\lepr}{\diamond_e} \nc{\vep}{\varepsilon} \nc{\labs}{\mid\!}
\nc{\rabs}{\!\mid} \nc{\hsha}{\widehat{\sha}}
\nc{\lsha}{\stackrel{\leftarrow}{\sha}}
\nc{\rsha}{\stackrel{\rightarrow}{\sha}} \nc{\lc}{\lfloor}
\nc{\rc}{\rfloor}
\nc{\tpr}{\sqcup}
\nc{\nctpr}{\vee}
\nc{\plpr}{\star}
\nc{\rbplpr}{\bar{\plpr}}
\nc{\sqmon}[1]{\langle #1\rangle}
\nc{\forest}{\calf} \nc{\ass}[1]{\alpha({#1})}
\nc{\altx}{\Lambda_X} \nc{\vecT}{\vec{T}} \nc{\onetree}{\bullet}
\nc{\Ao}{\check{A}}
\nc{\seta}{\underline{\Ao}}
\nc{\deltaa}{\overline{\delta}}
\nc{\trho}{\tilde{\rho}}

\nc{\rpr}{\circ}
\nc{\dpr}{{\tiny\diamond}}
\nc{\rprpm}{{\rpr}}

\nc{\mmbox}[1]{\mbox{\ #1\ }} \nc{\ann}{\mrm{ann}}
\nc{\Aut}{\mrm{Aut}} \nc{\can}{\mrm{can}}
\nc{\twoalg}{{two-sided algebra}\xspace}
\nc{\colim}{\mrm{colim}}
\nc{\Cont}{\mrm{Cont}} \nc{\rchar}{\mrm{char}}
\nc{\cok}{\mrm{coker}} \nc{\dtf}{{R-{\rm tf}}} \nc{\dtor}{{R-{\rm
tor}}}
\renewcommand{\det}{\mrm{det}}
\nc{\depth}{{\mrm d}}
\nc{\Div}{{\mrm Div}} \nc{\End}{\mrm{End}} \nc{\Ext}{\mrm{Ext}}
\nc{\Fil}{\mrm{Fil}} \nc{\Frob}{\mrm{Frob}} \nc{\Gal}{\mrm{Gal}}
\nc{\GL}{\mrm{GL}} \nc{\Hom}{\mrm{Hom}} \nc{\hsr}{\mrm{H}}
\nc{\hpol}{\mrm{HP}} \nc{\id}{\mrm{id}} \nc{\im}{\mrm{im}}
\nc{\incl}{\mrm{incl}} \nc{\length}{\mrm{length}}
\nc{\LR}{\mrm{LR}} \nc{\mchar}{\rm char} \nc{\NC}{\mrm{NC}}
\nc{\mpart}{\mrm{part}} \nc{\pl}{\mrm{PL}}
\nc{\ql}{{\QQ_\ell}} \nc{\qp}{{\QQ_p}}
\nc{\rank}{\mrm{rank}} \nc{\rba}{\rm{RBA }} \nc{\rbas}{\rm{RBAs }}
\nc{\rbpl}{\mrm{RBPL}}
\nc{\rbw}{\rm{RBW }} \nc{\rbws}{\rm{RBWs }} \nc{\rcot}{\mrm{cot}}
\nc{\rest}{\rm{controlled}\xspace}
\nc{\rdef}{\mrm{def}} \nc{\rdiv}{{\rm div}} \nc{\rtf}{{\rm tf}}
\nc{\rtor}{{\rm tor}} \nc{\res}{\mrm{res}} \nc{\SL}{\mrm{SL}}
\nc{\Spec}{\mrm{Spec}} \nc{\tor}{\mrm{tor}} \nc{\Tr}{\mrm{Tr}}
\nc{\mtr}{\mrm{sk}}

\nc{\ab}{\mathbf{Ab}} \nc{\Alg}{\mathbf{Alg}}
\nc{\Algo}{\mathbf{Alg}^0} \nc{\Bax}{\mathbf{Bax}}
\nc{\Baxo}{\mathbf{Bax}^0} \nc{\RB}{\mathbf{RB}}
\nc{\RBo}{\mathbf{RB}^0} \nc{\BRB}{\mathbf{RB}}
\nc{\Dend}{\mathbf{DD}} \nc{\bfk}{{\bf k}} \nc{\bfone}{{\bf 1}}
\nc{\base}[1]{{a_{#1}}} \nc{\detail}{\marginpar{\bf More detail}
    \noindent{\bf Need more detail!}
    \svp}
\nc{\Diff}{\mathbf{Diff}} \nc{\gap}{\marginpar{\bf
Incomplete}\noindent{\bf Incomplete!!}
    \svp}
\nc{\FMod}{\mathbf{FMod}} \nc{\mset}{\mathbf{MSet}}
\nc{\rb}{\mathrm{RB}} \nc{\Int}{\mathbf{Int}}
\nc{\Mon}{\mathbf{Mon}}
\nc{\remarks}{\noindent{\bf Remarks: }}
\nc{\OS}{\mathbf{OS}} 
\nc{\Rep}{\mathbf{Rep}}
\nc{\Rings}{\mathbf{Rings}} \nc{\Sets}{\mathbf{Sets}}
\nc{\DT}{\mathbf{DT}}

\nc{\BA}{{\mathbb A}} \nc{\CC}{{\mathbb C}} \nc{\DD}{{\mathbb D}}
\nc{\EE}{{\mathbb E}} \nc{\FF}{{\mathbb F}} \nc{\GG}{{\mathbb G}}
\nc{\HH}{{\mathbb H}} \nc{\LL}{{\mathbb L}} \nc{\NN}{{\mathbb N}}
\nc{\QQ}{{\mathbb Q}} \nc{\RR}{{\mathbb R}} \nc{\TT}{{\mathbb T}}
\nc{\VV}{{\mathbb V}} \nc{\ZZ}{{\mathbb Z}}


\nc{\calao}{{\mathcal A}} \nc{\cala}{{\mathcal A}}
\nc{\calc}{{\mathcal C}} \nc{\cald}{{\mathcal D}}
\nc{\cale}{{\mathcal E}} \nc{\calf}{{\mathcal F}}
\nc{\calfr}{{{\mathcal F}^{\,r}}} \nc{\calfo}{{\mathcal F}^0}
\nc{\calfro}{{\mathcal F}^{\,r,0}} \nc{\oF}{\overline{F}}
\nc{\calg}{{\mathcal G}} \nc{\calh}{{\mathcal H}}
\nc{\cali}{{\mathcal I}} \nc{\calj}{{\mathcal J}}
\nc{\call}{{\mathcal L}} \nc{\calm}{{\mathcal M}}
\nc{\caln}{{\mathcal N}} \nc{\calo}{{\mathcal O}}
\nc{\calp}{{\mathcal P}} \nc{\calr}{{\mathcal R}}
\nc{\calt}{{\mathcal T}} \nc{\caltr}{{\mathcal T}^{\,r}}
\nc{\calu}{{\mathcal U}} \nc{\calv}{{\mathcal V}}
\nc{\calw}{{\mathcal W}} \nc{\calx}{{\mathcal X}}
\nc{\CA}{\mathcal{A}}

\nc{\fraka}{{\mathfrak a}} \nc{\frakB}{{\mathfrak B}}
\nc{\frakb}{{\mathfrak b}} \nc{\frakd}{{\mathfrak d}}
\nc{\oD}{\overline{D}}
\nc{\frakF}{{\mathfrak F}} \nc{\frakg}{{\mathfrak g}}
\nc{\frakm}{{\mathfrak m}} \nc{\frakM}{{\mathfrak M}}
\nc{\frakMo}{{\mathfrak M}^0} \nc{\frakp}{{\mathfrak p}}
\nc{\frakS}{{\mathfrak S}} \nc{\frakSo}{{\mathfrak S}^0}
\nc{\fraks}{{\mathfrak s}} \nc{\os}{\overline{\fraks}}
\nc{\frakT}{{\mathfrak T}}
\nc{\oT}{\overline{T}}
\nc{\frakX}{{\mathfrak X}} \nc{\frakXo}{{\mathfrak X}^0}
\nc{\frakx}{{\mathbf x}}
\nc{\frakTx}{\frakT}      
\nc{\frakTa}{\frakT^a}        
\nc{\frakTxo}{\frakTx^0}   
\nc{\caltao}{\calt^{a,0}}   
\nc{\ox}{\overline{\frakx}} \nc{\fraky}{{\mathfrak y}}
\nc{\frakz}{{\mathfrak z}} \nc{\oX}{\overline{X}}

\font\cyr=wncyr10

\nc{\redtext}[1]{\textcolor{red}{#1}}


\title[$\calo$-operators]{$\calo$-operators on associative algebras and dendriform algebras}

\author{Chengming Bai}
\address{Chern Institute of Mathematics\& LPMC, Nankai University, Tianjin 300071, China}
         \email{baicm@nankai.edu.cn}
\author{Li Guo}
\address{Department of Mathematics, Zhejiang University, Hangzhou, Zhejiang, China, and Department of Mathematics and Computer Science,
         Rutgers University,
         Newark, NJ 07102}
\email{liguo@newark.rutgers.edu}

\author{Xiang Ni}
\address{Chern Institute of Mathematics \& LPMC, Nankai
University, Tianjin 300071, P.R.
China}\email{xiangn$_-$math@yahoo.cn}

\date{\today}


\begin{abstract}
An O-operator is a relative version of a Rota-Baxter operator and, in the Lie algebra context, is originated from the operator form of the classical Yang-Baxter equation. We generalize the well-known construction of dendriform dialgebras and trialgebras from Rota-Baxter algebras to a construction from $\calo$-operators. We then show that this construction from $\calo$-operators gives all dendriform dialgebras and trialgebras.
Furthermore there are bijections between certain equivalence classes of invertible $\calo$-operators and certain equivalence classes of dendriform dialgebras and trialgebras.
\end{abstract}


\maketitle


\setcounter{section}{0}
{\ }
\vspace{-1cm}

\section{Introduction}
\mlabel{sec:int}
This paper shows that there is a close tie between two seemingly unrelated objects, namely $\calo$-operators and dendriform dialgebras and trialgebras, generalizing and strengthening a previously established connection from Rota-Baxter algebras to dendriform algebras~\mcite{Ag1,Ag2,E1}.

To fix notations, we let $\bfk$ denote a commutative unitary ring in this paper.
By a $\bfk$-algebra we mean an associative (not
necessarily unitary) $\bfk$-algebra, unless otherwise stated.

\begin{defn}{\rm Let $R$ be a $\bfk$-algebra and let $\lambda\in \bfk$ be given. If a $\bfk$-linear map $P:R\rightarrow R$ satisfies the {\bf Rota-Baxter relation}:
\begin{equation}
P(x)P(y)=P(P(x)y)+P(xP(y))+ \lambda P(xy),
\quad \forall x,y \in R,
\mlabel{eq:rbo}
\end{equation}
then $P$ is called a {\bf Rota-Baxter
operator of weight $\lambda$} and $(R,P)$ is called a {\bf Rota-Baxter
algebra of weight $\lambda$.}
}
\mlabel{de:rb}
\end{defn}
Rota-Baxter
algebras arose from studies in probability and combinatorics in the 1960s~\mcite{Bax,Ca,R1} and have experienced a quite remarkable renaissance in recent years with broad applications in mathematics and physics~\mcite{Ag1,Bai1,CK2,E1,EG1,EGK,Gun,GK1,GK3,GZ}.

On the other hand, with motivation from periodicity of algebraic $K$-theory and operads, dendriform dialgebras were introduced by Loday~\cite{Lo} in the 1990s.
\begin{defn}{\rm
A {\bf dendriform dialgebra} is a triple $(R,\prec,\succ)$ consisting of a $\bfk$-module $R$ and two bilinear operations $\prec$ and $\succ $ on $R$ such that
\begin{equation}
(x\prec y)\prec z=x\prec (y\star z),\;\;(x\succ y)\prec z=x\succ (y\prec
z),\;\;x\succ (y\succ z)=(x\star y)\succ z,
\mlabel{eq:dend}
\end{equation}
for all $x,y,z\in R$. Here
$x\star y=x\prec y+x\succ y$.
}
\mlabel{de:dend}
\end{defn}

Aguiar~\mcite{Ag1} first established the following connection from Rota-Baxter algebras to dendriform dialgebras.
\begin{theorem} {\rm $($\mcite{Ag1,Ag2}$)$}
For a Rota-Baxter $\bfk$-algebra $(R,P)$ of weight zero, the binary operations
\begin{equation}
x\prec_P y=x P(y),\;\;
x\succ_P y=P(x) y,\;\;
\forall x,y\in R,
\mlabel{eq:rbd}
\end{equation}
define a dendriform dialgebra $(R,\prec_P,\succ_P)$.
\mlabel{thm:ag}
\end{theorem}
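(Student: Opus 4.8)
The plan is to verify directly the three defining identities~\eqref{eq:dend} of a dendriform dialgebra for the operations $\prec_P$ and $\succ_P$ introduced in~\eqref{eq:rbd}, using only the associativity of the product of $R$ together with the weight-zero form of the Rota-Baxter relation~\eqref{eq:rbo}, namely $P(x)P(y)=P(P(x)y)+P(xP(y))$. Throughout I would record the auxiliary identity $x\star_P y = x\prec_P y + x\succ_P y = xP(y)+P(x)y$, which feeds the mixed terms on the right-hand sides.

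First I would dispatch the middle axiom $(x\succ_P y)\prec_P z = x\succ_P(y\prec_P z)$, which is the easiest and requires no Rota-Baxter hypothesis at all: expanding both sides via the definitions gives $(P(x)y)P(z)$ on the left and $P(x)(yP(z))$ on the right, and these coincide by associativity alone.

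For the outer axioms the Rota-Baxter relation does the work. In the first axiom $(x\prec_P y)\prec_P z = x\prec_P(y\star_P z)$, the left-hand side expands to $xP(y)P(z)$ and the right-hand side to $xP(yP(z)+P(y)z)$; the key step is to rewrite $P(y)P(z)$ on the left as $P(P(y)z)+P(yP(z))=P(yP(z)+P(y)z)$, after which both sides agree. The third axiom $x\succ_P(y\succ_P z)=(x\star_P y)\succ_P z$ is handled symmetrically: the left-hand side is $P(x)P(y)z$, and applying the Rota-Baxter relation to $P(x)P(y)$ turns it into $P(xP(y)+P(x)y)z=P(x\star_P y)z$, which is exactly the right-hand side. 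Since each check is a one-line manipulation, there is no genuine obstacle; the only point demanding care is that the hypothesis is weight $\lambda=0$, so the term $\lambda P(xy)$ drops out and the products $P(y)P(z)$ and $P(x)P(y)$ factor cleanly as $P$ applied to the two mixed terms that reassemble into $P$ of a $\star_P$-product. It is precisely the vanishing of the weight that makes $\prec_P,\succ_P$ satisfy the dendriform axioms rather than some weight-deformed variant.
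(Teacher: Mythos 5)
Your proof is correct and takes essentially the same approach as the paper: the paper itself cites Aguiar for Theorem \ref{thm:ag} rather than reproving it, but its proof of the generalization (Theorem \ref{thm:aopdend}, for $\calo$-operators) is exactly this direct verification — expand an outer axiom, apply the operator identity to the product of two operator images, and reassemble the result into $P$ (resp.\ $\alpha$) of a $\star$-product, with the remaining axioms ``verified similarly.'' Your observation that the middle axiom uses only associativity while the weight-zero Rota-Baxter relation is needed precisely for the two outer axioms matches the paper's computation specialized to the bimodule $(A,L,R)$.
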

This defines a functor from the category of Rota-Baxter algebras of weight 0 to the category of dendriform dialgebras. This work has inspired quite a few subsequent studies~\mcite{AL,Bai1,Bai2,BCD,E1,EG1,Guo} that generalized and further clarified the relationship between Rota-Baxter algebras and dendriform dialgebras and trialgebras of Loday and Ronco~\mcite{LR}, including the adjoint functor of the above functor, the related Poincare-Birkhoff-Witt theorem and Gr\"{o}bner-Shirshov basis.

These studies further suggested that there should be a close relationship between Rota-Baxter algebras and dendriform dialgebras. Then it is natural to ask whether every dendriform dialgebra and trialgebra could be derived from a Rota-Baxter algebra by a construction like Eq.~(\mref{eq:rbd}). As later examples show, this is quite far from being true.

Our main purpose of this paper is to show that there is a generalization of the concept of a Rota-Baxter operator that could derive all the dendriform dialgebras and trialgebras. It is given by the concept of an $\calo$-operator on a $\bfk$-module and or a $\bfk$-algebra. Such a concept was first introduced in the context of Lie algebras ~\mcite{Bai1,Bo,Ku} to study the classical Yang-Baxter equations and integrable systems, and was recently generalized and applied to the study of Lax pairs and PostLie algebras~\mcite{BGN2}. In the associative algebra context, $\calo$-operators have been applied to study associative analogues of the classical Yang-Baxter equation~\mcite{BGN}.

For simplicity, we only define $\calo$-operators on modules in the introduction, referring the reader to later sections for the more case of $\calo$-operators on algebras.

Let $(A,\apr)$ be a $\bfk$-algebra. Let $(V,\ell,r)$ be an $A$-bimodule, consisting of a
compatible pair of a left $A$-module $(V,\ell)$ given by $\ell: A\to \End(V)$ and a right
 $A$-module $(V,r)$ given by $r:A\to \End(V)$. A linear map $\aop:V\rightarrow A$ is called an {\bf $\calo$-operator on the module} $V$ if
\begin{equation}
\aop(u)\apr
\aop(v)=\aop(\ell(\aop(u))v)+\aop(ur(\aop(v))),\;\;\forall u,v\in V.
\mlabel{eq:aop0}
\end{equation}
When $V$ is taken to be the $A$-bimodule $(A,L,R)$ associated to the algebra $A$, an $\calo$-operator
on the module is just a Rota-Baxter operator of weight zero.

For an $\calo$-operator $\alpha:V\to A$, define
\begin{equation}
 x \prec_\alpha y = x r(\alpha(y)), \quad x \succ_\alpha y= \ell(\alpha(x))y, \quad \forall x,y\in V.
 \end{equation}
Then as in the case of Rota-Baxter operators, we obtain a dendriform dialgebra $(V,\prec_\alpha,\succ_\alpha)$. We also define an {\bf $\calo$-operator on an algebra} that generalizes a Rota-Baxter operator with a non-zero weight and show that an $\calo$-operator on an algebra gives a dendriform trialgebra. We prove in Section~\mref{ss:od1} that every dendriform dialgebras and trialgebra can be recovered from an $\calo$-operator in this way, in contrary to the case of a Rota-Baxter operator.

In Section~\mref{sec:oda} we further show that the dendriform dialgebra or trialgebra structure on $V$ from an $\calo$-operator $\alpha:V\to A$ transports to a dendriform dialgebra or trialgebra structure on $A$ through $\alpha$ under a natural condition. To distinguish the two dendriform dialgebras and trialgebras from an $\calo$-operator $\alpha:V\to A$, we call them the {\bf dendriform dialgebras and trialgebras on the domain} and the {\bf dendriform dialgebras and trialgebras on the range} of $\alpha$ respectively.

By considering the multiplication on the range $A$, we show that, the correspondence from $\calo$-operators to dendriform dialgebras and trialgebras on the domain $V$ implies a more refined correspondence from $\calo$-operators to dendriform dialgebras and trialgebra on the range $A$ that are compatible with $A$ in the sense that the dialgebra and trialgebra multiplications give a splitting (or decomposition) of the associative product of $A$.
We finally quantify this refined correspondence by providing bijections
between certain equivalent classes of $\calo$-operators with range in $A$ and equivalent classes of compatible dendriform dialgebra and trialgebra structures on $A$.

\smallskip

\noindent {\bf Acknowledgements: }
C. Bai thanks the support by the National Natural Science Foundation of China (10621101), NKBRPC (2006CB805905) and SRFDP
(200800550015). L. Guo thanks the NSF grant DMS 0505445
for support, and thanks the Chern Institute of Mathematics at Nankai University and the Center of Mathematics at Zhejiang University for their hospitalities.

\section{$\calo$-operators and dendriform algebras on the domains}
\mlabel{sec:adend} In this section we study the relationship between
$\calo$-operators and dendriform dialgebras and trialgebras on the domains of these operators.
The related concepts and notations are introduced in
Section~\mref{ss:oop} and \mref{ss:rbd}. Then we show that $\calo$-operators recover all dendriform dialgebras and trialgebras on the domains of the operators.

\subsection{$A$-bimodule $\bfk$-algebras and $\calo$-operators}
\mlabel{ss:oop}

We start with a generalization of the well-known concept of bimodules.

\begin{defn}{\rm
Let $(A,\apr)$ be a $\bfk$-algebra with multiplication $\apr$.
\begin{enumerate}
\item Let $(R,\rpr)$ be a $\bfk$-algebra with multiplication $\rpr$. Let
$\ell, r:A\rightarrow \End_\bfk(R)$ be two linear maps. We call $(R,\rpr,\ell,r)$ or simply $R$ an
{\bf $A$-bimodule $\bfk$-algebra} if $(R,\ell,r)$ is an $A$-bimodule
that is compatible with the multiplication $\rpr$ on $R$. More
precisely, we have
\begin{eqnarray}
\ell(x\apr y)v&=&\ell(x)(\ell(y)v),\;\ell(x)(v\rpr w)=(\ell(x)v)\rpr
w,
\mlabel{eq:twoalg1}\\
vr(x\apr y)&=&(vr(x))r(y),\; (v\rpr w)r(x)=v\rpr (wr(x)),\;
\mlabel{eq:twoalg2}\\
(\ell(x)v)r(y)&=&\ell(x)(vr(y)),\; (vr(x))\rpr w=v\rpr (\ell(x)w),\;
\forall\; x,y\in A, v,w\in R. \mlabel{eq:twoalg3}
\end{eqnarray}
\item
A homomorphism between two $A$-bimodule $\bfk$-algebras $(R_1,\rpr_1,\ell_1,r_1)$ and
$(R_2,\rpr_2,\ell_2,r_2)$ is a $\bfk$-linear map $g: R_1\to R_2$ that is both an $A$-bimodule homomorphism and a $\bfk$-algebra homomorphism.
\end{enumerate}
} \mlabel{de:bimal}
\end{defn}

An $A$-bimodule $(V,\ell,r)$ becomes an $A$-bimodule $\bfk$-algebra if we equip $V$ with the zero multiplication.

For a $k$-algebra $(A,\apr)$ and $x\in A$, define the left and right
actions
$$L(x): A\to A,\ L(x)y=x\apr y\,; \quad R(x): A\to A,\  yR(x)=y\apr x, \quad \forall y\in A.$$
Further define
\begin{equation}
L=L_A:A\rightarrow \End_\bfk(A),\ x\mapsto L(x); \quad
R=R_A:A\rightarrow \End_\bfk(A),\ x\mapsto R(x), \quad x\in A.
\mlabel{eq:can}
\end{equation}
As is well-known, $(A,L,R)$ is an $A$-bimodule. Moreover, $(A,\apr, L,R)$ is an
$A$-bimodule $\bfk$-algebra.
Note that an $A$-bimodule $\bfk$-algebra needs not be a left or
right $A$-algebra. For example, the $A$-bimodule $\bfk$-algebra
$(A,\apr,L,R)$ is an $A$-algebra if and only if $A$ is a commutative
$\bfk$-algebra.

We can now define our generalization~\mcite{BGN} of Rota-Baxter operators.
\begin{defn}{\rm
Let $(A,\apr)$ be a $\bfk$-algebra.
\begin{enumerate}
\item
Let $V$ be an $A$-bimodule. A linear map
$\aop:V\rightarrow A$ is called an {\bf $\calo$-operator on the module} $V$ if $\alpha$
satisfies
\begin{equation}
\aop(u)\apr
\aop(v)=\aop(\ell(\aop(u)v))+\aop(ur(\aop(v))),\;\;\forall u,v\in V. \mlabel{eq:aopv}
\end{equation}
\item
Let $(R,\rpr, \ell,r)$ be an $A$-bimodule $\bfk$-algebra and $\lambda\in \bfk$. A linear map
$\aop:R\rightarrow A$ is called an {\bf $\calo$-operator on the algebra $R$ of weight
$\lambda$} if $\alpha$ satisfies
\begin{equation}
\aop(u)\apr
\aop(v)=\aop(\ell(\aop(u)v))+\aop(ur(\aop(v)))+\lambda\aop(u\rpr
v),\;\;\forall u,v\in R. \mlabel{eq:aopa}
\end{equation}
\end{enumerate}
}
\mlabel{de:aop}
\end{defn}

\begin{remark}
{\rm
\begin{enumerate}
\item
Obviously, for the $A$-bimodule $\bfk$-algebra $(A,\apr,L,R)$, an $\calo$-operator $\alpha: (A,\apr,L,R)\to A$ of weight $\lambda$ is just a Rota-Baxter operator on $(A,\apr)$ of the same weight. An $\calo$-operator can be viewed as
a relative version of a Rota-Baxter operator in the sense that the
domain and range of an $\calo$-operator might be different.
\item
The construction of $\calo$-operators of $\lambda=0$ has been defined by Uchino~\mcite{Uc} under the name of a generalized Rota-Baxter operator who also obtained Theorem~\mref{thm:aopdend}.(\mref{it:aopdd}).
\end{enumerate}
}
\end{remark}

We note the following simple relationship between $\calo$-operators on modules and $\calo$-operators on algebras of weight zero.

\begin{lemma}
Let $A$ be a $\bfk$-algebra.
If $\alpha:R\to A$ is an $\calo$-operator on a $\bfk$-algebra $(R,\rpr)$ of weight zero, then $\alpha$ is an $\calo$-operator on the underlying $\bfk$-module of $(R,\rpr)$. Conversely, let $\alpha:V\to A$ be an $\calo$-operator on a $\bfk$-module $V$. Equip $V$ with an associative multiplication (say the zero multiplication) $\rpr$. Then $\alpha$ is an $\calo$-operator on the algebra $(V,\circ)$ of weight zero.
\mlabel{lem:zero}
\end{lemma}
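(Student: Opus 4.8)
The plan is to recognize that the defining identity for an $\calo$-operator on the algebra $(R,\rpr)$ of weight zero is formally the same as the defining identity for an $\calo$-operator on the underlying module: the right-hand sides of Equations~(\mref{eq:aopa}) and~(\mref{eq:aopv}) share the same two terms $\aop(\ell(\aop(u)v))$ and $\aop(ur(\aop(v)))$, differing only by the weight term $\lambda\aop(u\rpr v)$ present in the former. Thus both implications will reduce to setting $\lambda=0$ and comparing the two equations directly, with no genuine computation required.

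For the first implication, I would start from the hypothesis that $\alpha:R\to A$ satisfies Equation~(\mref{eq:aopa}) with $\lambda=0$. Substituting $\lambda=0$ annihilates the term $\lambda\aop(u\rpr v)$, leaving exactly Equation~(\mref{eq:aopv}). Since the left and right actions $\ell,r$ of the $A$-bimodule $\bfk$-algebra $(R,\rpr,\ell,r)$ are by definition the actions exhibiting the underlying module $(R,\ell,r)$ as an $A$-bimodule, this identity is precisely the statement that $\alpha$ is an $\calo$-operator on that underlying module. Nothing else is needed.

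For the converse, the one point needing care is that the phrase ``$\calo$-operator on the algebra $(V,\rpr)$'' presupposes that $(V,\rpr,\ell,r)$ is an $A$-bimodule $\bfk$-algebra, i.e.\ that the compatibility conditions~(\mref{eq:twoalg1})--(\mref{eq:twoalg3}) hold. I would therefore first record that equipping the $A$-bimodule $V$ with the zero multiplication makes these conditions hold trivially: every compatibility relation involving $\rpr$ has both sides equal to $0$, while the relations not involving $\rpr$ are just the bimodule axioms already assumed. This is exactly the remark made after Definition~\mref{de:bimal} that an $A$-bimodule becomes an $A$-bimodule $\bfk$-algebra under the zero multiplication. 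Once $(V,\rpr,\ell,r)$ is known to be an $A$-bimodule $\bfk$-algebra, I would again set $\lambda=0$ in Equation~(\mref{eq:aopa}); the weight term vanishes and the remaining identity is Equation~(\mref{eq:aopv}), which holds because $\alpha$ is an $\calo$-operator on the module $V$. Hence $\alpha$ is an $\calo$-operator on $(V,\rpr)$ of weight zero.

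There is essentially no technical obstacle; the content of the lemma is bookkeeping around the definitions. The only place demanding slight attention is the converse, where one should \emph{verify} that the chosen multiplication on $V$ is compatible with the bimodule structure (so that the definition of an $\calo$-operator on an algebra in Definition~\mref{de:aop} even applies) rather than take it for granted---and the zero multiplication is precisely the safe universal choice guaranteeing this.
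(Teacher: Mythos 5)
Your proof is correct and takes essentially the same approach as the paper, which states Lemma~\mref{lem:zero} without proof as an immediate consequence of Definition~\mref{de:aop}: with $\lambda=0$ the weight term $\lambda\aop(u\rpr v)$ disappears and Eqs.~(\mref{eq:aopa}) and~(\mref{eq:aopv}) coincide. The one point you rightly flag in the converse---that the zero multiplication makes $(V,\rpr,\ell,r)$ an $A$-bimodule $\bfk$-algebra so that Definition~\mref{de:aop} applies---is exactly what the paper records in the remark following Definition~\mref{de:bimal}.
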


Thus we have natural maps between $\calo$-operators on an algebra of weight zero and $\calo$-operators on a module. But the map from $\calo$-operators on a module to $\calo$-operators on an algebra of weight zero is not canonical in the sense that it depends on a choice of a multiplication on the module which will play a subtle role later in the paper (See the remark before Theorem~\mref{thm:ontov}). Thus we would like to distinguish these two kinds of $\calo$-operators.

\subsection{Rota-Baxter algebras and dendriform algebras}
\mlabel{ss:rbd}
Generalizing the concept of a dendriform dialgebra of Loday defined in Section~\mref{sec:int}, the concept of a dendriform trialgebra was introduced by Loday and Ronco~\mcite{LR}.

\begin{defn}
{\rm {\bf (\mcite{LR})} Let $\bfk$ be a commutative ring.
A {\bf dendriform $\bfk$-trialgebra} is a quadruple $(T,\prec,\succ,\spr)$ consisting of a $\bfk$-module $T$ and three bilinear products $\prec$, $\succ$ and $\spr$ such that
\allowdisplaybreaks{
\begin{eqnarray}
&&(x\prec y)\prec z=x\prec (y\star z),\
(x\succ y)\prec z=x\succ (y\prec z),\notag \\
&&(x\star y)\succ z=x\succ (y\succ z),\
(x\succ y)\spr z=x\succ (y\spr z),\
\mlabel{eq:tri}\\
&&(x\prec y)\spr z=x\spr (y\succ z),\
(x\spr y)\prec z=x\spr (y\prec z),\
(x\spr y)\spr z=x\spr (y\spr z) \notag
\end{eqnarray}
}
for all $x,y,z\in T$. Here
$\star=\prec+\succ+\spr.$
}
\end{defn}
\begin{prop}
 $($\mcite{Lo,LR}$)$\quad Given a dendriform dialgebra $(D, \prec, \succ)$ (resp. dendriform trialgebra $(D,\prec,\succ,\spr)$).
The product given by
\begin{equation}
x\star y=x\prec y+x\succ y,\;\;\forall x,y\in D
\mlabel{eq:split}
\end{equation}
{\rm (}resp.
\begin{equation}
x\star y=x\prec y+x\succ y+x\spr y,\;\;\forall x,y\in D)
\mlabel{eq:split3}
\end{equation}
defines an associative algebra product on $D$.
\mlabel{pp:loday}
\end{prop}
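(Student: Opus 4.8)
The plan is to verify directly that the product $\star$ is associative in each case, by expanding both $(x\star y)\star z$ and $x\star (y\star z)$ using bilinearity together with the defining relations, and then checking that the two expansions coincide. No auxiliary construction is needed: associativity will be a purely formal consequence of the dendriform (trialgebra) identities.

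For the dialgebra case, I would first expand
\[
(x\star y)\star z = (x\star y)\prec z + (x\star y)\succ z .
\]
Using bilinearity of $\prec$ in its first argument together with the first two relations of Eq.~(\mref{eq:dend}), the first summand becomes $x\prec (y\star z) + x\succ (y\prec z)$, while the third relation gives $(x\star y)\succ z = x\succ (y\succ z)$ at once. On the other side, I expand $x\star (y\star z) = x\prec (y\star z) + x\succ (y\star z)$ and use bilinearity of $\succ$ in its second argument to split $x\succ (y\star z) = x\succ (y\prec z) + x\succ (y\succ z)$. This yields exactly the same three terms $x\prec (y\star z)$, $x\succ (y\prec z)$, $x\succ (y\succ z)$, so the two sides agree.

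For the trialgebra case the argument is the same in spirit, only with more terms to track. I would expand $(x\star y)\star z$ into its three components $(x\star y)\prec z$, $(x\star y)\succ z$ and $(x\star y)\spr z$; after using bilinearity to split $x\star y = x\prec y + x\succ y + x\spr y$, each component is rewritten by the seven relations of Eq.~(\mref{eq:tri}): the first, second and sixth relations handle the $\prec$-component, the third handles the $\succ$-component, and the fourth, fifth and seventh handle the $\spr$-component. This produces a sum of seven monomials of the shape $x\,\omega_1\,(y\,\omega_2\,z)$ with $\omega_i\in\{\prec,\succ,\spr\}$. Expanding $x\star (y\star z)$ instead, and distributing $\succ$ and $\spr$ over $y\star z$ in their second arguments, I obtain the same seven monomials, so again the two sides coincide.

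I do not expect any genuine obstacle here; the only point requiring care is the bookkeeping in the trialgebra case, namely matching each of the seven relations to the correct component and confirming that every monomial produced on one side has a partner on the other with no leftover. The clean pairing between the seven relations and the seven monomials $x\,\omega_1\,(y\,\omega_2\,z)$ makes this transparent once the expansions are written out in full.
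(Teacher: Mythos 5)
Your proof is correct; the only slight imprecision is that one of your ``seven monomials,'' namely $x\prec(y\star z)$, has $\star$ rather than a single operation $\omega_2\in\{\prec,\succ,\spr\}$ in its inner slot (fully expanded, each side consists of nine monomials), but since both sides produce this same term intact, the matching goes through. Note that the paper itself offers no proof of this proposition---it is quoted from Loday and Loday--Ronco~\mcite{Lo,LR}---and your direct expansion of $(x\star y)\star z$ and $x\star(y\star z)$ against the dendriform axioms is exactly the standard verification given in those sources.
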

We summarize Proposition~\mref{pp:loday} by saying that dendriform dialgebra (resp. trialgebra) gives a {\bf splitting} of the associative multiplication $\star$.

Generalizing Theorem~\mref{thm:ag}, Ebrahimi-Fard~\mcite{E1} showed that, if $(R,\rpr,P)$ is a Rota-Baxter algebra of
weight $\lambda\neq 0$, then the multiplications
\begin{equation}
x\prec_P y: = x\rpr P(y),\quad x\succ_P y: = P(x)\rpr y, \quad x\spr_P y: =\lambda
x\rpr y,\quad \forall x,y\in R, \mlabel{eq:rbdt}
\end{equation}
defines a dendriform trialgebra $(R,\prec_P,\succ_P,\spr_P)$.

For a given $\bfk$-module $V$, define
\begin{eqnarray}
\mathbf{RB}_\lambda(V):&=& \left \{(V,\rpr,P)\ \Big|\ \begin{array}{l} (V,\rpr) \text{ is an }\bfk- \text{algebra } \text{ and } \\
P \text{ is a Rota-Baxter operator of weight } \lambda \text{ on } (V,\rpr)\end{array} \right\},
\mlabel{eq:rbset}\\
\mathbf{DD}(V):&=&\{ (V,\prec,\succ)\ |\ (V,\prec,\succ) \text{ is a dendriform dialgebra} \},
\mlabel{eq:ddset}
\\
\mathbf{DT}(V):&=&\{ (V,\prec,\succ,\spr)\ |\ (V,\prec,\succ,\spr) \text{ is a dendriform trialgebra} \}.
\mlabel{eq:dtset}
\end{eqnarray}
Then Eq.~(\mref{eq:rbdt}) yields a map
\begin{equation}
\Phi_{V,\lambda}: \xymatrix{\mathbf{RB}_\lambda(V) \ar[rrr] &&& \mathbf{DT}(V)
}
\mlabel{eq:relnt}
\end{equation}
which, when $\lambda=0$, reduces to the map
\begin{equation}
\Phi_{V,0}: \xymatrix{\mathbf{RB}_0(V) \ar[rrr] &&& \mathbf{DD}(V)
}
\mlabel{eq:relnd}
\end{equation}
{}from Theorem~\mref{thm:ag}.
Thus deriving all dendriform dialgebras (resp. trialgebras) on $V$ from Rota-Baxter operators on $V$ amounts to the surjectivity of $\Phi_{V,0}$ (resp. $\Phi_{V,\lambda}$).

Unfortunately this map is quite far away from being surjective. As an example, consider the rank two free $\bfk$-module $V:=\bfk e_1\oplus \bfk e_2$ with $\bfk=\CC$. In this case, $\mathbf{RB}_0(V)$, namely the set of Rota-Baxter operators of weight zero that could be defined on $V$, was computed in~\mcite{LHB}. Then through the map $\Phi_{V,0}$ above, these Rota-Baxter operators give the following six dendriform dialgebras on $V$ (products not listed are taken to be zero):
\begin{eqnarray*}
& \hspace{-2.5cm}(1).\; e_i\succ e_j=e_i\prec e_j=0;
 &(2).\; e_2\succ e_2=e_2\prec e_2=\frac{1}{2}e_1; \\
&  (3).\; e_1\succ e_1=e_1, e_1\succ e_2=e_2\prec e_1=e_2;
& (4).\; e_2\prec e_2=e_1; \\
& (5).\;  e_1\prec e_1=e_1, e_1\succ e_2=e_2\prec e_1=e_2;
& (6).\; e_2\succ e_2=e_1.
\end{eqnarray*}
However, according to~\mcite{Z}, there are at least the following additional five dendriform dialgebras on $V$ (products not listed are taken to be zero):
\begin{eqnarray*}
& \hspace{-2.5cm} (1).\; e_1\prec e_1=e_1, e_2\succ e_2=e_2;
& (2).\; e_2\succ e_1=e_2, e_1\prec e_1=e_1, e_1\prec e_2=e_2; \\
& (3).\; e_1\prec e_2=-e_2, e_1\succ e_1=e_1, e_1\succ e_2=e_2;
& (4).\; e_1\prec e_1=e_2, e_1\prec e_1=-e_2; \\
& \hspace{-2.2cm} (5).\; e_1\prec e_1=\frac{1}{3}e_2, e_1\succ e_1=\frac{2}{3}e_2. &
\end{eqnarray*}
Thus we could not expect to recover all dendriform dialgebras and trialgebras from Rota-Baxter operators. We will see that this situation will change upon replacing Rota-Baxter operators by $\calo$-operators.

\subsection{From $\calo$-operators to dendriform algebras on the domains}
\mlabel{ss:od1}
We first show that the procedure of deriving dendriform dialgebras and trialgebras from Rota-Baxter operators can be generalized to $\calo$-operators.
\begin{theorem}
Let $(A,\apr)$ be an associative algebra.
\begin{enumerate}
\item
Let $(R,\twopr,\ell,r)$ be an $A$-bimodule $\bfk$-algebra. Let $\aop:R\rightarrow A$ be an $\calo$-operator on the algebra $R$ of weight $\lambda$. Then the multiplications
\begin{equation}
u\prec_\aop v:=ur(\aop(v)),\;\;\
u\succ_\aop v:=\ell(\aop(u))v,\;\;
u\spr_\aop v: = \lambda\, u\twopr v,\;\; \forall u,v\in R,
\mlabel{eq:aopdend}
\end{equation}
define a dendriform
trialgebra $(R,\prec_\aop,\succ_\aop,\spr_\aop)$.
Further, the multiplication $\star_\aop:=\prec_\aop + \succ_\aop +\, \spr_\aop$ on $R$ defines an associative product on $R$ and the map $\aop: (R,\star_\aop) \to (A,\apr)$ is a $\bfk$-algebra homomorphism.
\mlabel{it:triax}
\item
Let $(V,\ell,r)$ be an $A$-bimodule. Let $\aop: V\to A$ be an $\calo$-operator on the module $V$.
Then the multiplications
\begin{equation}
u\prec_\aop v:=ur(\aop(v)),\;\;\
u\succ_\aop v:=\ell(\aop(u))v,\;\; \forall u,v\in V,
\mlabel{eq:aopdd}
\end{equation}
define a dendriform dialgebra $(V,\prec_\aop,\succ_\aop)$. Further,
the multiplication $\star_\aop:=\prec_\aop + \succ_\aop$ on $V$ defines an associative product and $\aop: (V,\star_\aop) \to (A,\apr)$ is a $\bfk$-algebra homomorphism.
\mlabel{it:aopdd}
\end{enumerate}
\mlabel{thm:aopdend}
\end{theorem}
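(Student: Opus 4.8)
The plan is to verify the seven defining identities of a dendriform trialgebra directly for the products in \eqref{eq:aopdend}, and then to obtain the dialgebra case (\mref{it:aopdd}) as the specialization $\lambda=0$.

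The organizing observation is that the defining relation \eqref{eq:aopa} of an $\calo$-operator is exactly the homomorphism property $\aop(u\star_\aop v)=\aop(u)\apr\aop(v)$. Indeed, by $\bfk$-linearity of $\aop$ and the definitions of the three products, $\aop(u\star_\aop v)=\aop(\ell(\aop(u))v)+\aop(ur(\aop(v)))+\lambda\aop(u\twopr v)$, and this equals $\aop(u)\apr\aop(v)$ precisely by \eqref{eq:aopa}. This settles the asserted homomorphism statement immediately, and it supplies the one non-formal substitution needed below, namely replacing $\aop(y\star_\aop z)$ by $\aop(y)\apr\aop(z)$ (and likewise $\aop(x\star_\aop y)$) wherever $\star_\aop$ occurs under $\aop$.

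I would then check the seven axioms \eqref{eq:tri} one at a time; each collapses to a single compatibility condition from Definition~\mref{de:bimal}, or to associativity of $\twopr$. For $(x\prec y)\prec z=x\prec(y\star z)$ I expand both sides to $(xr(\aop(y)))r(\aop(z))$ and $xr(\aop(y\star z))$, rewrite the latter as $xr(\aop(y)\apr\aop(z))$ via the homomorphism property, and match them using the first identity of \eqref{eq:twoalg2}; dually, $(x\star y)\succ z=x\succ(y\succ z)$ uses the homomorphism property together with the first identity of \eqref{eq:twoalg1}. The purely mixed axiom $(x\succ y)\prec z=x\succ(y\prec z)$ is exactly the first identity of \eqref{eq:twoalg3}. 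The three axioms carrying a single $\spr$ reduce, after factoring out $\lambda$, to the three remaining module-algebra conditions, namely the second identities in \eqref{eq:twoalg1}, \eqref{eq:twoalg2} and \eqref{eq:twoalg3}, and finally $(x\spr y)\spr z=x\spr(y\spr z)$ reduces, after factoring $\lambda^2$, to associativity of $\twopr$ on $R$. This produces a clean one-to-one matching between the six compatibilities of Definition~\mref{de:bimal} plus associativity of $\twopr$ and the seven trialgebra axioms. Associativity of $\star_\aop$ is then immediate from Proposition~\mref{pp:loday}.

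For part~(\mref{it:aopdd}) I would equip $V$ with the zero multiplication, so that by Lemma~\mref{lem:zero} the map $\aop$ is an $\calo$-operator on the algebra $(V,\twopr)$ of weight $0$; the products $\prec_\aop,\succ_\aop$ are unchanged and $\spr_\aop=0$. Applying part~(\mref{it:triax}) with $\lambda=0$, the four trialgebra axioms containing $\spr$ become trivial and the remaining three are precisely the dendriform dialgebra identities \eqref{eq:dend}, with $\star_\aop=\prec_\aop+\succ_\aop$ associative and $\aop$ a homomorphism as before. I do not expect a genuine obstacle: the whole argument is a bookkeeping verification, and the only point requiring care is pairing each of the seven axioms with the correct one among the structurally similar identities \eqref{eq:twoalg1}--\eqref{eq:twoalg3}. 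The one conceptual input is the reinterpretation of \eqref{eq:aopa} as a homomorphism property, which is what makes the two axioms involving $\star$ go through.
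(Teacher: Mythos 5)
Your proposal is correct and follows essentially the same route as the paper: a direct verification of the trialgebra axioms using the bimodule-algebra compatibilities \eqref{eq:twoalg1}--\eqref{eq:twoalg3} together with the $\calo$-operator relation \eqref{eq:aopa} (your ``homomorphism property'' reformulation is exactly how the paper handles the two axioms involving $\star_\aop$ and the homomorphism claim), and the same reduction of part~(\mref{it:aopdd}) to part~(\mref{it:triax}) by equipping $V$ with the zero multiplication via Lemma~\mref{lem:zero}. Your systematic matching of all seven axioms to the six compatibilities plus associativity of $\twopr$ is in fact more complete than the paper's proof, which verifies only the first axiom in detail and leaves the rest as ``similar arguments.''
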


\begin{proof}
(\mref{it:triax}) For any $u,v,w\in R$, by the definitions of
$\prec_\alpha, \succ_\alpha, \spr_\alpha$ and $A$-bimodule $\bfk$-algebras, we have
\begin{eqnarray*}
(u\prec_\aop v)\prec_\aop w &=& (u\prec_\aop v)\,r (\aop(w))=(u\,r(\aop(v)))\,r(\aop(w)) \quad \text{(by Eq.~(\mref{eq:aopdend}))} \\
&=&
u\,r(\aop(v)\aop(w)) \quad \text{(by Eqs.~(\mref{eq:twoalg1})-(\mref{eq:twoalg3}))}\\
&=& u\,r\big(\aop(\ell(\aop(v))w)+\aop(vr(\aop(w)))+\lambda \aop(v\twopr w)\big) \quad \text{(by Eq.~(\mref{eq:aopa}))}\\
&=&u\prec_\aop \big(\ell(\aop(v))w+ v\,r(\aop(w))+\lambda v\twopr w\big) \quad \text{(by Eq.~(\mref{eq:aopdend}))}\\
&=&u\prec_\aop (v\succ_\aop w)+u\prec_\aop (v\prec_\aop w)+u\prec_\aop(v\,\spr_\aop\, w)
\quad \text{(by Eq.~(\mref{eq:aopdend}))}.
\end{eqnarray*}
Similar arguments can be applied to verify the other axioms for a dendriform trialgebra in Eq.~(\mref{eq:tri}).
\smallskip

The second statement follows from Proposition~\mref{pp:loday} and the definition of $\aop$:
$$ \aop(u\star_\aop v)=\aop(u\prec_\aop v + u\succ_\aop v + u\,\spr_\aop\, v)
= \aop( ur(\aop(v)) + \ell(\aop(u))v + \lambda\, u\twopr v)
= \aop(u)\apr \aop(v).$$

\noindent
(\mref{it:aopdd}) By Lemma~\mref{lem:zero}, when we equip $V$ with the zero multiplication $\rpr$, the $\calo$-operator $\alpha:V\to A$ on the module becomes an $\calo$-operator on the algebra $(R,\rpr)$ of weight zero. Then by Item~(\mref{it:triax}), $(V,\prec_\alpha,\succ_\alpha,\spr_\alpha)$ is a dendriform trialgebra which is in fact a dendriform dialgebra since $\spr_\alpha$ is zero.
\end{proof}

For a $\bfk$-algebra $A$ and an $A$-bimodule $\bfk$-algebra $(R,\rpr)$, denote
\begin{equation}
\oaset_\lambda(R,A):=\oaset_{\lambda}((R,\rpr),A):= \{\alpha:R\to A\ |\ \alpha \text{ is an } \calo\text{-operator on the algebra } R \text{ of weight } \lambda\}.
\mlabel{eq:opseta}
\end{equation}
By Theorem~\mref{thm:aopdend}.(\mref{it:triax}), we obtain a map
\begin{equation}
\oamap_{\lambda,R,A}: \oaset_\lambda((R,\rpr),A) \longrightarrow \mathbf{DT}(|R|),
\mlabel{eq:odt}
\end{equation}
where $|R|$ denotes the
underlying $\bfk$-module of $R$.

Now let $V$ be a $\bfk$-module.
Let $\oaset_\lambda(V,-)$ denote the set of $\calo$-operators on the algebra $(V,\rpr)$ of weight $\lambda$, where $\rpr$ is an associative product on $V$. In other words,
$$\oaset_\lambda(V,-):=\coprod_{R,A} \oaset_\lambda(R,A),$$
where the disjoint union runs through all pairs $(R,A)$ where $A$ is a $\bfk$-algebra and $R$ is an $A$-bimodule $\bfk$-algebra such that $|R|= V$. Then from the map $\oamap_{\lambda,V,A}$ in Eq.~(\mref{eq:odt}) we obtain
\begin{equation}
\oamap_{\lambda,V}:=\coprod_{R,A} \oamap_{\lambda,V,A}: \oaset_\lambda(V,-) \longrightarrow \mathbf{DT}(V).
\mlabel{eq:odt2}
\end{equation}

Similarly, for a $\bfk$-module $V$ and $\bfk$-algebra $A$, denote
\begin{equation}
\omset(V,A)=\{\alpha:V\to A\ |\ \alpha \text{ is an } \calo\text{-operator on the module }V\}.
\mlabel{eq:opsetv}
\end{equation}
By Theorem~\mref{thm:aopdend}.(\mref{it:aopdd}), we obtain a map
\begin{equation}
\ommap_{V,A}: \omset(V,A) \longrightarrow \mathbf{DD}(V).
\mlabel{eq:odd}
\end{equation}
Let $\omset(V,-)$ denote the set of $\calo$-operators on the module $V$. In other words,
$$\omset(V,-):=\coprod_{A} \omset(V,A),$$
where $A$ runs through all the $\bfk$-algebras. Then we have
\begin{equation}
\ommap_{V}:=\coprod_{A} \ommap_{V,A}: \omset(V,-) \longrightarrow \mathbf{DD}(V).
\mlabel{eq:odd2}
\end{equation}

Let us compare $\oamap_{0,V}$ and $\ommap_{V}$ for a $\bfk$-module $V$. For a given associative multiplication $\rpr$ on $V$, we have the natural bijection $\oaset_{0}((V,\rpr),-) \to \omset(V,-)$ sending an $\calo$-operator $\alpha:(V,\rpr)\to A$ on the algebra $(V,\rpr)$ to the $\calo$-operator $\alpha:V\to A$ on the underlying $\bfk$-module $V$. Thus $\oaset_{0}(V,-)$ is the disjoint union of multiple copies of $\omset(V,-)$, one copy for each associative multiplication on $V$. Therefore, the surjectivity of $\ommap_{V}$ is a stronger property than the surjectivity of $\oamap_{0,V}$.

\begin{theorem} Let $V$ be a $\bfk$-module. The maps
$\oamap_{1,V}$ and $\ommap_V$ are surjective.
\mlabel{thm:ontov}
\end{theorem}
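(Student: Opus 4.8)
The plan is to prove both surjectivity statements by a single self-referential construction: given a target dendriform structure on $V$, I build an $\calo$-operator whose associated dendriform structure (via Theorem~\mref{thm:aopdend}) is exactly the given one, by taking the range algebra to be $V$ itself equipped with the associative product $\star$ furnished by Proposition~\mref{pp:loday}, and taking the $\calo$-operator to be the identity map $\id_V$. The dendriform axioms are precisely what is needed to make this circular-looking construction legitimate.

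I would treat $\oamap_{1,V}$ first, the case of $\ommap_V$ being the exact analogue with the third product suppressed. So let $(V,\prec,\succ,\spr)\in\mathbf{DT}(V)$ be given. Put $A:=(V,\star)$ with $\star=\prec+\succ+\spr$, which is associative by Proposition~\mref{pp:loday}. I equip the underlying module $V$ with the structure of an $A$-bimodule $\bfk$-algebra $(R,\twopr,\ell,r)$ by setting $\ell(a)v:=a\succ v$, $vr(a):=v\prec a$, and $u\twopr v:=u\spr v$. The point is that each of the seven defining identities of an $A$-bimodule $\bfk$-algebra in Definition~\mref{de:bimal} (Eqs.~(\mref{eq:twoalg1})--(\mref{eq:twoalg3})) translates, under these definitions, into exactly one of the seven dendriform trialgebra axioms of Eq.~(\mref{eq:tri}): for instance the left-module axiom $\ell(x\star y)v=\ell(x)(\ell(y)v)$ becomes $(x\star y)\succ v=x\succ(y\succ v)$, the mixed condition $(\ell(x)v)r(y)=\ell(x)(vr(y))$ becomes $(x\succ v)\prec y=x\succ(v\prec y)$, and the three compatibilities of $\twopr$ with $\ell$ and $r$ become the three trialgebra axioms involving $\spr$. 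Hence $(R,\twopr,\ell,r)$ is a genuine $A$-bimodule $\bfk$-algebra.

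Next I check that $\id_V:R\to A$ is an $\calo$-operator on the algebra $R$ of weight $1$. Writing out Eq.~(\mref{eq:aopa}) with $\alpha=\id_V$ and $\lambda=1$ reduces it to $u\star v=\ell(u)v+ur(v)+u\twopr v=u\succ v+u\prec v+u\spr v$, which holds by the definition of $\star$. Finally, applying Theorem~\mref{thm:aopdend}.(\mref{it:triax}) to this $\calo$-operator recovers, via Eq.~(\mref{eq:aopdend}), the operations $u\prec_{\id}v=ur(v)=u\prec v$, $u\succ_{\id}v=\ell(u)v=u\succ v$, and $u\spr_{\id}v=u\twopr v=u\spr v$; that is, $\oamap_{1,V}$ sends this $\calo$-operator to the original trialgebra, proving surjectivity. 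The argument for $\ommap_V$ is identical with $\spr$ and $\twopr$ deleted: one takes $A=(V,\prec+\succ)$ with the same $\ell,r$, verifies the three $A$-bimodule axioms from the three dendriform dialgebra axioms of Eq.~(\mref{eq:dend}), and checks that $\id_V$ satisfies the module $\calo$-operator identity Eq.~(\mref{eq:aopv}).

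I do not anticipate a genuine obstacle: the construction is forced, and the entire content is the bookkeeping that matches each structural axiom with the corresponding dendriform axiom. The only point demanding care is to verify that the correspondence between the seven compatibility conditions of Definition~\mref{de:bimal} and the seven axioms of Eq.~(\mref{eq:tri}) is a genuine bijection, with no axiom left unused or double-counted, so that $(R,\twopr,\ell,r)$ really is an $A$-bimodule $\bfk$-algebra rather than merely satisfying a proper subset of the required identities. This is where I would be most careful to write out the dictionary explicitly.
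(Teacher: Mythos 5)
Your proof is correct and is essentially the paper's own argument: the paper likewise takes $A=(V,\star)$, equips $V$ with the $A$-bimodule $\bfk$-algebra structure $(V,\spr,L_\succ,R_\prec)$ where $L_\succ(x)y=x\succ y$ and $R_\prec(x)y=y\prec x$, and exhibits the identity map as an $\calo$-operator of weight $1$ whose image under $\oamap_{1,V}$ is the given trialgebra. The only cosmetic difference is in the dialgebra case, where the paper first equips $V$ with the zero multiplication and reuses the weight-$1$ construction while you verify the module $\calo$-operator identity directly; the two checks coincide.
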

By this theorem, all dendriform dialgebra (resp. trialgebra) structures on $V$ could be recovered from $\calo$-operators on the module (resp. on the algebra).
\begin{proof}
We first prove the surjectivity of $\oamap_{1,V}$. Let $(V, \prec,\succ, \spr)$
be a dendriform trialgebra.
By Proposition~\mref{pp:loday}, $V$ becomes a $\bfk$-algebra with the product $\apr:=\prec+\succ+\,\spr$.
Define two linear maps
\begin{equation}
 L_\succ, R_\prec: V\rightarrow \End_\bfk(V), \quad L_\succ(x)(y)=x\succ y, \;\; R_\prec (x)(y)=y\prec x, \;\;x,y\in V.
\mlabel{eq:denlr}
\end{equation}
Then it is straightforward to check that the dendriform trialgebra axioms of $(V,\prec,\succ,\,\spr\,)$ imply that $(V,\spr,L_\succ, R_\prec)$ satisfies all the axioms in Eq.~(\mref{eq:twoalg1}) -- (\mref{eq:twoalg3}) for a $(V,\apr)$-bimodule $\bfk$-algebra. For example,
$$L_\succ(x\apr y) z = (x\prec y + x\succ y + x \spr y) \succ z
= x\succ (y\succ z)=L_\succ(x) (L_\succ (y)(z)), \quad \forall x,y,z\in V.$$
Also the identity linear map
$$\id: (V, \spr, L_{\succ},R_{\prec}) \rightarrow (V,\apr)$$
from the $(V,\apr)$-bimodule $\bfk$-algebra $(V,\spr\,,L_\succ,R_\prec)$ to the $\bfk$-algebra $(V,\apr)$ is an $\calo$-operator on the algebra $(V,\spr)$ of weight 1:
\begin{equation}
 \id(x) \apr \id(y)=x\apr y = x \prec y + x \succ y + x\spr y
= \id (x R_\prec (\id(y))) + \id (L_\succ (\id(x)) y)+ \id
(x\spr y),
\mlabel{eq:ido}
\end{equation}
$\forall x,y\in V.$
Further, by Eq.~(\mref{eq:aopdend}), we have
$\prec_{\id}\; =\; \prec, \succ_{\id}\;=\;\succ$ and $\spr_{\id}\;=\;\spr$.
Thus $(V,\prec,\succ,\spr)$ is the image of the $\calo$-operator $\id: (V, L_{\succ},R_{\prec},\spr) \rightarrow (V,\apr)$ under the map $\oamap_{1,V}$, showing that $\oamap_{1,V}$ is surjective.
\smallskip

To prove the surjectivity of $\ommap_V$, let $(V,\prec,\succ)$ be a dendriform dialgebra. Then by equipping $V$ with the zero multiplication $\spr=0$, we obtain a dendriform trialgebra $(V,\prec,\succ,\spr)$. Let $\apr=\prec+\succ+\,\spr$. Then by the proof of the surjectivity of $\oamap_{0,V}$ we have the $(V,\apr)$-bimodule $\bfk$-algebra $(V,\spr,L_\succ,R_\prec)$ defined by Eq.~(\mref{eq:denlr}) and the $\calo$-operator
$\id: (V,\spr,L_\succ,R_\prec) \to (V,\apr)$ on the algebra of weight 1 such that $\oamap_{1,V}(\id)=(V,\prec,\succ,\spr)$.
Since $\spr=0$, we see that Eq.~(\mref{eq:ido}) satisfied by $\id$ as an $\calo$-operator on the algebra $(V,\spr)$ is also the equation for the map $\id$ to be an $\calo$-operator on the module $V$. Further $\ommap_V(\id)=\oamap_{0,V}(\id)=(V,\prec,\succ)$. This proves the surjectivity of $\ommap_V$.
\end{proof}

\section{$\calo$-operators and dendriform algebras on the ranges}
\mlabel{sec:oda}
We next study another kind of relationship between $\calo$-operators and dendriform dialgebras and trialgebras by focusing on the algebra $(A,\apr)$ in an $\calo$-operator $\alpha: R\to A$. We first show that, under a natural condition, an $\calo$-operator $\alpha: R\to A$ on the module (resp. on the algebra) gives a dendriform dialgebra (resp. trialgebra) structure on $A$ that gives a splitting of $\apr$ in the sense of Proposition~\mref{pp:loday} (see the remark thereafter).
We then show that the $\calo$-operators $\alpha: R\to A$, as the $\bfk$-module (resp. $\bfk$-algebra) $R$ varies, recover all dendriform dialgebra or trialgebra structures on $(A,\apr)$ with the splitting property. We in fact give bijections between suitable equivalence classes of these $\calo$-operators and (equivalent classes of) dendriform dialgebras and trialgebras.

\subsection{From $\calo$-operators to dendriform algebras on the ranges}
\mlabel{ss:od2}

We first give the following consequence of Theorem~\mref{thm:aopdend}, providing a dendriform dialgebra or a trialgebra on the range of an $\calo$-operator.
\begin{prop}
Let $(A,\apr)$ be a $\bfk$-algebra.
\begin{enumerate}
\item
Let $(R,\twopr,\ell,r)$ be an $A$-bimodule $\bfk$-algebra. Let $\aop:R\rightarrow A$ be an
$\calo$-operator on the algebra of weight $\lambda$.
If $\ker \aop$ is an ideal of $(R,\twopr)$, then there is a dendriform trialgebra structure on $\aop(R)$ given by
\begin{eqnarray}
&&\aop(u)\prec_{\aop,A} \aop(v):=\aop(ur(\alpha(v))),\quad
\aop(u)\succ_{\aop,A} \aop(v):=\aop(\ell(\alpha(u))v), \notag \\
&&\aop(u)\,\spr_{\aop,A}\, \aop(v):=\aop(\lambda u\twopr v),\quad \forall u,v\in R.
\mlabel{eq:tdend}
\end{eqnarray}
Furthermore, $\apr=\prec_{\aop,A} + \succ_{\aop,A} + \spr_{\aop,A}$ on $\aop(R)$.
In particular, if the $\calo$-operator $\aop$ is invertible (that is, bijective as a $\bfk$-linear map), then the multiplications
\begin{eqnarray}
&& x\prec_{\aop,A} y:=\aop(\aop^{-1}(x) r(y)),\quad
x\succ_{\aop,A} y:=\aop(\ell(x)\aop^{-1}(y)),\notag \\
&& x\,\spr_{\aop,A}\, y:=\aop(\lambda \aop^{-1}(x)\twopr \aop^{-1}(y)),\quad \forall x,y\in A,
\mlabel{eq:tdend2}
\end{eqnarray}
define a dendriform trialgebra $(A,\prec_{\aop,A},\succ_{\aop,A},\spr_{\aop,A})$
such that $\apr=\prec_{\aop,A} + \succ_{\aop,A} + \spr_{\aop,A}$ on $A$, called the {\bf dendriform trialgebra on the range} of $\alpha$.
\mlabel{it:indend}
\item
Let $(V,\ell,r)$ be a $A$-bimodule. Let $\aop:V\rightarrow A$ be an
invertible $\calo$-operator on the module.
Then
\begin{equation}
x\prec_{\aop,A} y:=\aop(\aop^{-1}(x) r(y)),\;\;
x\succ_{\aop,A} y: =\aop(\ell(x)\aop^{-1}(y)),\;\;
\forall x,y\in A,
\mlabel{eq:ddend2}
\end{equation}
define a dendriform dialgebra $(A,\prec_{\aop,A},\succ_{\aop,A})$ on $A$
such that $\apr=\prec_{\aop,A} + \succ_{\aop,A}$ on $A$, called the {\bf dendriform dialgebra on the range} of $\alpha$.
\mlabel{it:indendd}
\end{enumerate}
\mlabel{pp:indend}
\end{prop}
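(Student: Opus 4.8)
The plan is to obtain the dendriform structure on $\aop(R)$ by \emph{transporting} the dendriform trialgebra structure already produced on the domain. Indeed, Theorem~\mref{thm:aopdend}.(\mref{it:triax}) tells us that $(R,\prec_\aop,\succ_\aop,\spr_\aop)$ is a dendriform trialgebra, where by Eq.~(\mref{eq:aopdend}) the three products are $u\prec_\aop v=ur(\aop(v))$, $u\succ_\aop v=\ell(\aop(u))v$ and $u\spr_\aop v=\lambda\,u\twopr v$. The defining formulas~(\mref{eq:tdend}) on $\aop(R)$ are then exactly the push-forwards
\[
\aop(u)\diamond_{\aop,A}\aop(v):=\aop(u\diamond_\aop v),\qquad \diamond\in\{\prec,\succ,\spr\},
\]
of these products along the surjection $\aop:R\to\aop(R)$. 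First I would set up this point of view, so that the problem reduces to checking that $\aop$ carries the dendriform trialgebra $(R,\prec_\aop,\succ_\aop,\spr_\aop)$ onto a well-defined quotient structure on its image.

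The hard part --- indeed the only nonformal step --- is \textbf{well-definedness}: if $\aop(u)=\aop(u')$ and $\aop(v)=\aop(v')$, one must show $\aop(u\diamond_\aop v)=\aop(u'\diamond_\aop v')$ for each of the three products. This is where the hypothesis that $\ker\aop$ is an ideal of $(R,\twopr)$ enters. Writing $w=u-u'\in\ker\aop$, I would test the $\calo$-operator relation~(\mref{eq:aopa}) on the pair $(w,v')$: since $\aop(w)=0$ forces $\ell(\aop(w))=0$, the relation collapses to
\[
0=\aop(wr(\aop(v')))+\lambda\,\aop(w\twopr v').
\]
Because $\ker\aop$ is an ideal, $w\twopr v'\in\ker\aop$, so the last term vanishes and $\aop(wr(\aop(v')))=0$; combined with the elementary observation that $\aop(v)=\aop(v')$ already gives $ur(\aop(v))=ur(\aop(v'))$, this settles $\prec_{\aop,A}$. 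Testing~(\mref{eq:aopa}) instead on $(u',w')$ with $w'=v-v'\in\ker\aop$ and using $r(\aop(w'))=0$ handles $\succ_{\aop,A}$ symmetrically, and $\spr_{\aop,A}$ is immediate since $u\twopr v-u'\twopr v'=w\twopr v+u'\twopr w'\in\ker\aop$.

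Once well-definedness holds, everything else is formal. For any $x=\aop(u),\,y=\aop(v),\,z=\aop(w)$ in $\aop(R)$ and any product $\diamond$, the identity $x\diamond_{\aop,A}y=\aop(u\diamond_\aop v)$ and its iterates show that each side of a defining axiom~(\mref{eq:tri}), evaluated on $(x,y,z)$, equals $\aop$ applied to the corresponding expression in $(u,v,w)$; since $(R,\prec_\aop,\succ_\aop,\spr_\aop)$ already satisfies~(\mref{eq:tri}), so does $(\aop(R),\prec_{\aop,A},\succ_{\aop,A},\spr_{\aop,A})$. The splitting $\apr=\prec_{\aop,A}+\succ_{\aop,A}+\spr_{\aop,A}$ on $\aop(R)$ is just the relation~(\mref{eq:aopa}) read through the push-forward. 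For the invertible case one has $\ker\aop=0$, which is trivially an ideal and gives $\aop(R)=A$; substituting $u=\aop^{-1}(x)$, $v=\aop^{-1}(y)$ into the push-forward formulas then yields exactly~(\mref{eq:tdend2}).

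Finally, part~(\mref{it:indendd}) is deduced from part~(\mref{it:indend}) via Lemma~\mref{lem:zero}: equip the bimodule $V$ with the zero multiplication $\twopr=0$, so that the $\calo$-operator $\aop$ on the module becomes an $\calo$-operator on the algebra $(V,\twopr)$ of weight $0$. Since $\aop$ is invertible, $\ker\aop=0$ is an ideal, so part~(\mref{it:indend}) applies and produces a dendriform trialgebra on $A$; but with $\lambda=0$ and $\twopr=0$ the product $\spr_{\aop,A}$ vanishes identically, so the trialgebra degenerates to the dendriform dialgebra~(\mref{eq:ddend2}), with splitting $\apr=\prec_{\aop,A}+\succ_{\aop,A}$.
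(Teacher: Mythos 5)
Your proposal is correct and follows essentially the same route as the paper's own proof: you push the domain dendriform trialgebra of Theorem~\ref{thm:aopdend} forward along $\aop$, establish well-definedness by applying the $\calo$-operator identity to elements of $\ker\aop$ and invoking the ideal hypothesis to kill the weight term (your evaluation on the pairs $(w,v')$ and $(u',w')$ is just a mild rearrangement of the paper's single computation with $u-u'$), and then obtain part (b) by equipping $V$ with the zero multiplication so that $\aop$ becomes a weight-zero $\calo$-operator on the algebra. No gaps; the argument matches the paper's in both structure and substance.
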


\begin{proof}
\noindent
(\mref{it:indend})
We first prove that the multiplications in Eq.~(\mref{eq:tdend}) are well-defined. More precisely, for $u,u',v,v'\in R$ such that
$\aop(u)=\aop(u')$ and $\aop(v)=\aop(v')$, we check that
\begin{equation}
\aop(ur(\aop(v)))=\aop(u'r(\aop(v'))),\
\aop(\ell(\aop(u))v)=\aop(\ell(\aop(u'))v'),\ \aop(u\twopr v)=\aop(u'\twopr v').
\mlabel{eq:awell}
\end{equation}
But since $u-u'$ and $v-v'$ are in $\ker \alpha$, we have
$$0=\aop(u-u')\aop(v)=\aop(\ell (\aop(u-u'))v)+\aop((u-u')r(\aop(v)))+\lambda \aop((u-u')\twopr v)$$
with the first term on the right hand side vanishing. The third term also vanishes since $\ker \aop$ is an ideal of $(R,\twopr)$. Thus the second term also vanishes and $(u-u')r(\aop(v))$ is in $\ker \aop$.
We then find that
$$
ur(\aop(v))-u'r(\aop(v'))=
(u-u')r(\aop(v)) + u'r(\aop(v-v'))$$
is in $\ker \aop$. This verifies the first equation in Eq.~(\mref{eq:awell}). The other two equations are verified similarly.
Then the axioms in Eq.~(\mref{eq:tri}) for $(\aop(R),\prec_{\aop,A},\succ_{\aop,A},\,\spr_{\aop,A}\,)$ to be a dendriform trialgebra follows from the axioms for $(R,\prec_\aop,\succ_\aop,\,\spr_\aop\,)$ to be a dendriform trialgebra.

Since $\alpha$ is an $\calo$-operator, we have
$$ \alpha(u)\apr \alpha(v)
=\aop(ur(\aop(v)))+\aop(\ell(\aop(u))v)+\aop(u\twopr v)
= u \prec_{\aop,A} v + u \succ_{\aop,A} v + u \spr_{\aop,A} v, \quad \forall u,v \in R.$$
This proves the second statement in Item~(\mref{it:indend}).
Then the last statement follows as a direct consequence.
\smallskip

\noindent
(\mref{it:indendd}). Let $(V,\ell,r)$ be an $A$-bimodule and let $\alpha:V\to A$ be an $\calo$-operator on the module $V$. Then when $V$ is equipped with an associative multiplication $\rpr$ (say $\rpr\equiv 0$), $\alpha$ becomes an $\calo$-operator on the algebra $(V,\rpr)$ of weight zero. Then by Item~(\mref{it:indend}), $(V,\rpr,\prec_{\alpha,A},\succ_{\alpha,A}, \spr_{\alpha,A})$ is a dendriform trialgebra such that $\apr=\prec_{\alpha,A}+\succ_{\alpha,A}+ \spr_{\alpha,A}$. But since $x \spr_{\alpha,A}y =\alpha(0 \alpha^{-1}(x) \rpr \alpha^{-1}(y))=0, \forall x,y\in A,$ we see that $(V,\prec_{\alpha,A},\succ_{\alpha,A})$ is a dendriform dialgebra such that $\apr=\prec_{\alpha,A}+\succ_{\alpha,A}.$

\end{proof}

Proposition~\mref{pp:indend} motivate us to introduce the following notations.
\begin{defn}
Let  $(A,\apr)$ be a $\bfk$-algebra.
\begin{enumerate}
\item
Let $\ioaset_\lambda(A,\apr)$ (resp. $\iomset(A,\apr)$) denote the set of invertible (i.e., bijective) $\calo$-operators $\alpha:R\to A$ on the algebra of weight $\lambda\in\bfk$ (resp. on the module), where $R=(R,\circ,\ell,r)$ is an $A$-bimodule $\bfk$-algebra (resp. $R=(R,\ell,r)$ is an $A$-module).
\mlabel{it:inop}
\item
Let ${\bf DT}(A,\apr)$ $($resp. ${\bf DD}(A,\apr)$$)$ denote the set of dendriform
trialgebra $($resp.  dialgebra$)$ structures $(A,\prec,\succ,\spr)$ $($resp. $(A,\prec,\succ)$$)$ on $(A,\apr)$ such that $\apr=\prec+\succ+\;\spr$ $($resp. $\apr=\prec+\succ$$)$.
\mlabel{it:resdt}
\item
Let
\begin{equation}
\ioamap_A: \ioaset(A,\apr) \longrightarrow {\bf DT}(A,\apr), \quad
\aop \mapsto (\prec_{\aop,A},\succ_{\aop,A},\spr_{\aop,A})
\mlabel{eq:opdta}
\end{equation}
\begin{equation}
(resp.\quad \iommap_A: \iomset(A,\apr) \longrightarrow {\bf
DD}(A,\apr), \quad \aop \mapsto (\prec_{\aop,A},\succ_{\aop,A}).)
\mlabel{eq:opdda}
\end{equation}
be the maps defined by Proposition~\mref{pp:indend}.
\end{enumerate}
\mlabel{de:inop}
\end{defn}

\subsection{Bijective correspondences}
\mlabel{ss:od3}

Instead of proving just the surjectivities of the maps $\ioamap_A$ and $\iommap_A$ defined by Eq.~(\mref{eq:opdta}) and Eq.~(\mref{eq:opdda}), we give a more quantitative description of these maps.

We first prove a lemma that justifies the concepts that will be introduced next.
\begin{lemma}
Let $(A,\apr)$ be a $\bfk$-algebra and let $(R,\circ,\ell,r)$ be an $A$-bimodule $\bfk$-algebra. Let $\aop:(R,\rpr,\ell,r) \to A$ be an $\calo$-operator on the algebra $(R,\rpr)$ of weight $\lambda$.
\begin{enumerate}
\item
Let $g:(R_1,\circ_1,\ell_1,r_1)\to (R,\circ,\ell,r)$ be an isomorphism of $A$-bimodule $\bfk$-algebras. Then $\alpha  g: (R_1,\circ_1,\ell_1,r_1) \to A$ is an $\calo$-operator on the algebra $(R_1,\rpr_1)$ of weight $\lambda$.
\mlabel{it:autor}
\item
Let $f:A\to A$ be a $\bfk$-algebra automorphism. Then $f \aop: (R,\circ, \ell f^{-1}, r  f^{-1}) \to A$ is an $\calo$-operator on the algebra $(R,\rpr)$ of weight $\lambda$.
\mlabel{it:autoa}
\end{enumerate}
Similar statements hold for an $A$-bimodule $V$ in place of an $A$-bimodule $\bfk$-algebra $R$.
\mlabel{lem:oop}
\end{lemma}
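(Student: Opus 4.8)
The plan is to verify the defining $\calo$-operator identity by a direct substitution in each case, using only the homomorphism properties of $g$ and of $f$. Throughout I write the weight-$\lambda$ condition (\mref{eq:aopa}) for $\aop$ on $(R,\circ)$ in the form
$$\aop(u)\apr\aop(v)=\aop(\ell(\aop(u))v)+\aop(u\,r(\aop(v)))+\lambda\,\aop(u\circ v),\quad u,v\in R,$$
and I treat the module case at the end by simply deleting the last summand.

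For part (\mref{it:autor}), let $g\colon(R_1,\circ_1,\ell_1,r_1)\to(R,\circ,\ell,r)$ be the given isomorphism; I note that only its homomorphism properties, not its bijectivity, will actually be used. For $u,v\in R_1$ I apply the identity above to the pair $g(u),g(v)\in R$ and then rewrite the three terms on the right using the defining properties of $g$: the $A$-bimodule relations $g(\ell_1(x)w)=\ell(x)g(w)$ and $g(w\,r_1(x))=g(w)\,r(x)$ convert $\ell(\aop(g(u)))\,g(v)$ into $g(\ell_1(\aop(g(u)))\,v)$ and $g(u)\,r(\aop(g(v)))$ into $g(u\,r_1(\aop(g(v))))$, while the algebra-homomorphism relation $g(u\circ_1 v)=g(u)\circ g(v)$ handles the weight term. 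Applying $\aop$ to both sides then yields precisely the weight-$\lambda$ identity for $\aop g$ relative to $(\circ_1,\ell_1,r_1)$, so $\aop g$ is the asserted $\calo$-operator.

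For part (\mref{it:autoa}), write $\ell'=\ell f^{-1}$ and $r'=r f^{-1}$. The first task is to confirm that $(R,\circ,\ell',r')$ is again an $A$-bimodule $\bfk$-algebra, i.e.\ that it satisfies (\mref{eq:twoalg1})--(\mref{eq:twoalg3}); this is the one spot where the hypothesis that $f$ be an \emph{algebra} automorphism is essential, since checking for instance $\ell'(x\apr y)v=\ell'(x)(\ell'(y)v)$ requires $f^{-1}(x\apr y)=f^{-1}(x)\apr f^{-1}(y)$. The axioms not involving a product in $A$ transport automatically, because $\circ$ is unchanged and $\ell',r'$ are merely $\ell,r$ precomposed with $f^{-1}$. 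For the $\calo$-operator identity itself I compute $(f\aop)(u)\apr(f\aop)(v)=f\big(\aop(u)\apr\aop(v)\big)$ using that $f$ is an algebra map, expand the inside via the identity for $\aop$, and distribute $f$ by linearity. The key cancellation is that $\ell'((f\aop)(u))=\ell(f^{-1}f\aop(u))=\ell(\aop(u))$ and likewise $r'((f\aop)(v))=r(\aop(v))$, so twisting the actions by $f^{-1}$ exactly absorbs the $f$ applied to the arguments of $\aop$; matching terms gives the weight-$\lambda$ identity for $f\aop$ relative to $(\ell',r')$.

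The stated module versions follow verbatim: one drops the weight term throughout, and in part (\mref{it:autor}) requires $g$ to be only an $A$-bimodule homomorphism. Since the argument is a term-by-term substitution, I expect no genuine obstacle; the only point demanding care is the preliminary verification that the twisted data $(R,\circ,\ell f^{-1},r f^{-1})$ in part (\mref{it:autoa}) is still an $A$-bimodule $\bfk$-algebra, which is exactly what forces $f$ to be an algebra automorphism rather than a mere linear isomorphism.
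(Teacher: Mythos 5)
Your proposal is correct and follows essentially the same route as the paper: both parts are verified by direct substitution, using the bimodule- and algebra-homomorphism properties of $g$ in part (\mref{it:autor}), and in part (\mref{it:autoa}) first checking that $(R,\circ,\ell f^{-1},r f^{-1})$ is again an $A$-bimodule $\bfk$-algebra and then exploiting the cancellation $\ell f^{-1}\big((f\aop)(u)\big)=\ell(\aop(u))$. The only cosmetic difference is in the module case, where the paper reduces to the algebra case by equipping $V$ with the zero multiplication while you simply delete the weight term; these amount to the same thing.
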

\begin{proof}
(\mref{it:autor}) For all $x,y\in R_1$, we have
\begin{eqnarray*}
(\alpha\circ g)(x)\apr (\alpha\circ g)(y)&=&
\alpha(g(x))\apr \alpha(g(y)) \\
&=& \alpha((\ell\alpha(g(x)))g(y))+\alpha(g(x)(r\alpha)(g(y))) +\lambda\alpha(g(x) \circ g(y))\\
&=& \alpha[g(\ell_1(\alpha(g(x)))y)] + \alpha[x r_1(\alpha(g(y)))] + \lambda \alpha[g(x \circ_1 y)]\\
&=& (\alpha \circ g)(\ell_1((\alpha\circ g)(x))y) + (\alpha\circ g)(x r_1((\alpha\circ g)(y))) +\lambda (\alpha\circ g)(x\circ_1 y).
\end{eqnarray*}
Thus $\alpha\circ g$ is an $\calo$-operator of weight $\lambda$. \smallskip

\noindent
(\mref{it:autoa}) Let $f:(A,\apr)\to (A,\apr)$ be a $\bfk$-algebra automorphism. It is easy to verify that $(R,\rpr,\ell f^{-1},r f^{-1})$ satisfies all the axioms of an $A$-bimodule $\bfk$-algebra. For example, the first equation in Eq.~(\mref{eq:twoalg1}) holds since
$$ (\ell f^{-1})(x\apr y) v = \ell(f^{-1}(x)\apr f^{-1}(y)) v = \ell(f^{-1}(x))(\ell(f^{-1}(y))v) = (\ell f^{-1})((\ell f^{-1})(y))v.$$
Further, $f\alpha: (R,\rpr,\ell f^{-1},r f^{-1})\to (A,\apr)$ is an $\calo$-operator since
\begin{eqnarray*}
(f\alpha)(x) \apr (f\alpha)(y) &=& f(\alpha(x) \apr \alpha(y)) \\
&=&f\big( \alpha(\ell(\alpha(x)) y) +\alpha(x \ell(r(y)))+\lambda \alpha( x \rpr y)\big) \\
&=& (f\alpha)\big((\ell f^{-1})((f\alpha)(x)y\big) + (f\alpha)\big(x(r f^{-1})((f\alpha)(y))\big) + \lambda (f\alpha)(x \rpr y).
\end{eqnarray*}

The proofs of the statements for $\calo$-operators on an $A$-bimodule $V$ in place of an $A$-bimodule $\bfk$-algebra are obtained by equipping $V$ with the zero multiplication and following the same argument as Theorem~\mref{thm:ontov}.
\end{proof}

We can now define equivalence relations among $\calo$-operators and dendriform algebras.
\begin{defn} {\rm
Let  $(A,\ast)$ be a $\bfk$-algebra.
\begin{enumerate}
\item
For $A$-bimodule $\bfk$-algebras $(R_i,\circ_i,\ell_i,r_i)$ and invertible $\calo$-operators
$\aop_i:R_i\to A, i=1,2,$ call $\aop_1$ and $\aop_2$ {\bf
isomorphic}, denoted by $\aop_1\cong \aop_2$, if there is an
isomorphism $g:(R_1,\circ_1,\ell_1,r_1) \to (R_2,\circ_2,\ell_2,r_2)$ of $A$-bimodule $\bfk$-algebras (see Definition~\mref{de:bimal}) such that $\aop_1=\aop_2 g$. Similarly define isomorphic invertible $\calo$-operators on modules.
\item
For $A$-bimodule $\bfk$-algebras $(R_i,\circ_i,\ell_i,r_i)$ and invertible $\calo$-operators
$\aop_i:R_i\to A, i=1,2,$ call $\aop_1$ and $\aop_2$ {\bf
equivalent}, denoted by $\aop_1\sim \aop_2$, if there exists a
$\bfk$-algebra automorphism $f:A\to A$ such that
$f \alpha_1 \cong \alpha_2$. In other words, if there exist a $\bfk$-algebra automorphism $f:A\to A$ and an isomorphism $g:(R_1,\circ_1,\ell_1 f^{-1},r_1 f^{-1})\to (R_2,\circ_2,\ell_2 ,r_2 )$ of $A$-bimodule $\bfk$-algebras such that $f \alpha_1= \alpha_2 g$. Similar define equivalent invertible $\calo$-operators on modules.
\item
Let $\ioaset(A,\apr)/{\cong}$ $($resp. $\ioaset(A,\apr)/{\sim}$$)$ denote the set of
equivalent classes from the relation $\cong$ $($resp. $\sim$$)$. Similarly define $\iomset(A,\apr)/{\cong}$ and $\iomset(A,\apr)/\sim$.
\item
Two dendriform trialgebras $(A,\prec_i,\succ_i,\cdot_i), i=1,2,$ on
$A$ are called {\bf isomorphic}, denoted by
$(A,\prec_1,\succ_1,\cdot_1)\cong (A,\prec_2,\succ_2,\cdot_2)$ if
there is a linear bijection $F:A\to A$ such that
$$F(x\prec_1 y)=F(x)\prec_2 F(y), \quad F(x\succ_1 y)=F(x)\succ_2 F(y), \quad F(x\cdot_1y)=F(x)\cdot_2F(y), \quad \forall x,y\in A.$$
\item
Two dendriform
dialgebras $(A,\prec_i,\succ_i), i=1,2,$ on $A$ are called
{\bf isomorphic}, denoted by $(A,\prec_1,\succ_1)\cong
(A,\prec_2,\succ_2)$ if there is a linear bijection $F:A\to A$ such
that
$$F(x\prec_1 y)=F(x)\prec_2 F(y), \quad F(x\succ_1 y)=F(x)\succ_2
F(y), \quad \forall x,y\in A.$$
\item
Let ${\bf DT}(A,\apr)/\cong$ $($resp. ${\bf DD}(A,\apr)/\cong$$)$ denote the set of
equivalent classes of ${\bf DT}(A,\apr)$ $($resp. ${\bf DD}(A,\apr)$$)$ modulo the isomorphisms.
\end{enumerate}
}
\mlabel{de:iso}
\end{defn}

\begin{theorem} Let $(A,\apr)$ be a $\bfk$-algebra. Let
$$\ioamap_A: \ioaset(A,\apr) \la {\bf DT}(A,\apr), \quad \alpha\mapsto
(A,\prec_{\alpha,A},\succ_{\alpha,A},\cdot_{\alpha,A}),
$$
be the map defined by Eq.~(\mref{eq:opdta}). Then $\ioamap_A$ induces
bijections
\begin{eqnarray}
\ioamap_{A,\cong}:&& \ioaset(A,\apr)/{\cong}\longrightarrow {\bf DT}(A,\apr),
\mlabel{eq:cong}
\\
\ioamap_{A,\sim}: && \ioaset(A,\apr)/{\sim}\; \la {\bf DT}(A,\apr)/{\cong}.
\mlabel{eq:sim}
\end{eqnarray}
In particular, $\ioamap_A$ is surjective.
\mlabel{thm:iso}
\end{theorem}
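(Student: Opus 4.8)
The plan is to first promote the surjection underlying Theorem~\mref{thm:ontov} to the bijection~\mref{eq:cong}, and then to descend this bijection to~\mref{eq:sim} by matching the relation $\sim$ on $\calo$-operators with the isomorphism relation $\cong$ on trialgebras. Throughout I take the weight to be $1$, as in the map $\oamap_{1,V}$ of Theorem~\mref{thm:ontov}. First I check that $\ioamap_A$ is constant on $\cong$-classes: if $g:(R_1,\circ_1,\ell_1,r_1)\to(R_2,\circ_2,\ell_2,r_2)$ is an isomorphism of $A$-bimodule $\bfk$-algebras with $\aop_1=\aop_2 g$, then substituting $\aop_1^{-1}=g^{-1}\aop_2^{-1}$ into Eq.~(\mref{eq:tdend2}) and using that $g$ intertwines $\ell_i,r_i$ and $\circ_i$ reduces $(\prec_{\aop_1,A},\succ_{\aop_1,A},\spr_{\aop_1,A})$ to $(\prec_{\aop_2,A},\succ_{\aop_2,A},\spr_{\aop_2,A})$, so $\ioamap_{A,\cong}$ is well defined. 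For injectivity, suppose $\aop_1$ and $\aop_2$ produce the same trialgebra on $A$. The only possible intertwiner is $g:=\aop_2^{-1}\aop_1$, a linear bijection $R_1\to R_2$ with $\aop_2 g=\aop_1$; reading off the identities $\prec_{\aop_1,A}=\prec_{\aop_2,A}$, $\succ_{\aop_1,A}=\succ_{\aop_2,A}$, $\spr_{\aop_1,A}=\spr_{\aop_2,A}$ and applying $\aop_2^{-1}$ yields exactly $g(vr_1(x))=g(v)r_2(x)$, $g(\ell_1(x)v)=\ell_2(x)g(v)$ and $g(u\circ_1 v)=g(u)\circ_2 g(v)$, so $g$ is an isomorphism of $A$-bimodule $\bfk$-algebras and $\aop_1\cong\aop_2$.

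Surjectivity of $\ioamap_{A,\cong}$ is supplied, essentially verbatim, by the construction in the proof of Theorem~\mref{thm:ontov}: given $(A,\prec,\succ,\spr)\in{\bf DT}(A,\apr)$, the identity map $\id:(A,\spr,L_\succ,R_\prec)\to(A,\apr)$ is an \emph{invertible} $\calo$-operator on the algebra of weight $1$, and since $\id^{-1}=\id$, Eq.~(\mref{eq:tdend2}) gives $\ioamap_A(\id)=(A,\prec,\succ,\spr)$. This establishes~\mref{eq:cong}.

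Next I record how a $\bfk$-algebra automorphism twists the range structure. By Lemma~\mref{lem:oop}.(\mref{it:autoa}), for $f\in\Aut(A,\apr)$ the map $f\aop:(R,\circ,\ell f^{-1},r f^{-1})\to A$ is again an invertible $\calo$-operator, and inserting $(f\aop)^{-1}=\aop^{-1}f^{-1}$ into Eq.~(\mref{eq:tdend2}) gives
$$x\prec_{f\aop,A}y=f\big(f^{-1}(x)\prec_{\aop,A}f^{-1}(y)\big),$$
and likewise for $\succ$ and $\spr$; hence $f$ is an isomorphism of trialgebras from $\ioamap_A(\aop)$ to $\ioamap_A(f\aop)$, so $\ioamap_A(f\aop)\cong\ioamap_A(\aop)$. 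With this I prove the equivalence $\aop_1\sim\aop_2\iff\ioamap_A(\aop_1)\cong\ioamap_A(\aop_2)$, which together with~\mref{eq:cong} yields~\mref{eq:sim}. For the forward direction, $\aop_1\sim\aop_2$ provides $f\in\Aut(A,\apr)$ with $f\aop_1\cong\aop_2$, whence $\ioamap_A(\aop_2)=\ioamap_A(f\aop_1)\cong\ioamap_A(\aop_1)$ using well-definedness of~\mref{eq:cong} and the twisting formula. For the converse, let $F:A\to A$ be a linear bijection carrying $\ioamap_A(\aop_1)$ isomorphically onto $\ioamap_A(\aop_2)$; the crucial observation is that $F$ preserves each of $\prec,\succ,\spr$, hence also their sum $\apr=\prec+\succ+\spr$ built into ${\bf DT}(A,\apr)$, so $F$ is automatically a $\bfk$-algebra automorphism of $(A,\apr)$. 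Setting $f=F$ and comparing the twisting formula with the isomorphism $F$ shows $\ioamap_A(f\aop_1)=\ioamap_A(\aop_2)$, so injectivity of~\mref{eq:cong} gives $f\aop_1\cong\aop_2$, i.e. $\aop_1\sim\aop_2$. Since $\cong$ refines $\sim$, the bijection~\mref{eq:cong} descends to the bijection~\mref{eq:sim}, and surjectivity of $\ioamap_A$ is immediate. The main obstacle is this converse step — recognizing that an abstract trialgebra isomorphism is forced to respect $\apr$ and hence to be an algebra automorphism — together with the bookkeeping in the injectivity argument verifying that $g=\aop_2^{-1}\aop_1$ respects all three bimodule-algebra operations.
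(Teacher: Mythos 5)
Your proof is correct, and its skeleton matches the paper's: well-definedness of $\ioamap_{A,\cong}$ by the same substitution argument, injectivity via the canonical intertwiner $g=\aop_2^{-1}\aop_1$, surjectivity via the invertible operator $\id\colon (A,\spr,L_\succ,R_\prec)\to(A,\apr)$ recycled from Theorem~\mref{thm:ontov}, and the key observation that a trialgebra isomorphism between members of ${\bf DT}(A,\apr)$ automatically preserves $\apr=\prec+\succ+\spr$ and is therefore a $\bfk$-algebra automorphism. Where you genuinely diverge is in the treatment of Eq.~(\mref{eq:sim}). The paper proves both directions by fresh computations: for the forward direction it verifies $f(x\succ_{\aop_1,A}y)=f(x)\succ_{\aop_2,A}f(y)$ (and its companions) directly from $f\aop_1=\aop_2 g$, and for the converse it defines $g:=\aop_2^{-1}F\aop_1$ and checks by hand that $g$ is an isomorphism of $A$-bimodule $\bfk$-algebras from $(R_1,\rpr_1,\ell_1F^{-1},r_1F^{-1})$ to $(R_2,\rpr_2,\ell_2,r_2)$. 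You instead isolate a twisting formula, $x\prec_{f\aop,A}y=f\bigl(f^{-1}(x)\prec_{\aop,A}f^{-1}(y)\bigr)$ (a consequence of Lemma~\mref{lem:oop}.(\mref{it:autoa}) and Eq.~(\mref{eq:tdend2})), and then deduce both directions of $\aop_1\sim\aop_2\iff\ioamap_A(\aop_1)\cong\ioamap_A(\aop_2)$ formally: the forward direction from the twisting formula plus well-definedness, and the converse by noting $\ioamap_A(F\aop_1)=\ioamap_A(\aop_2)$ and invoking the injectivity of Eq.~(\mref{eq:cong}) applied to the pair $F\aop_1,\aop_2$, which produces the required bimodule-algebra isomorphism for free. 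This buys economy (no verification is duplicated), at the cost of making the intertwiner less explicit than in the paper's direct computation. One further point in your favor: you fix the weight $\lambda=1$ throughout, which the paper leaves ambiguous (it drops the subscript from $\ioaset(A,\apr)$); this matters, since reading $g(u\circ_1 v)=g(u)\circ_2 g(v)$ off the identity $\spr_{\aop_1,A}=\spr_{\aop_2,A}$ requires cancelling $\lambda$, a step that is vacuous at $\lambda=0$ and legitimate at $\lambda=1$.
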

Similar statements hold for $\iommap_A$.

\begin{proof} Let $\alpha_i:(R_i,\circ_i,\ell_i,r_i)\to (A,\apr), i=1,2,$ be two isomorphic invertible $\aop$-operators.
Then there exists an isomorphism $g:(R_1,\circ_1,\ell_1,r_1)\to (R_2,\circ_2,\ell_2,r_2)$ of $A$-bimodule $\bfk$-algebras such that $\aop_1=\aop_2 g$. We see that
their corresponding dendriform trialgebras $$\ioamap_A(\alpha_1)= (A,\prec_{\alpha_1,A},\succ_{\alpha_1,A},\spr_{\alpha_1,A})
\quad \text{and}\quad
\ioamap_A(\alpha_2)= (A,\prec_{\alpha_2,A},\succ_{\alpha_2,A},\spr_{\alpha_2,A})$$ from Eq.~(\mref{eq:tdend2}) coincide since, for any $x,y\in A$, we have
$$x\prec_{\aop_1,A} y=\aop_1(\aop^{-1}_1(x)r_1(y)) =(\aop_2g)[(g^{-1}\aop_2^{-1}(x))(gr_2(y)g^{-1})]=
\aop_2(\aop_2^{-1}(x)r_2(y))=x\prec_{\aop_2,A} y,$$
$$x\succ_{\aop_1,A}
y=\aop_1(\ell_1(x)\aop_1^{-1}(y)) =(\aop_2g)[(g^{-1}\ell_2(x)g)(g^{-1}\aop^{-1}_2)(y)]=
\aop_2(\ell_2(x)\aop^{-1}_2(y))=x\succ_{\aop_2,A} y,$$
$$x\cdot_{\aop_1,A}
y=\lambda\alpha_1(\alpha_1^{-1}(x)\circ_1\alpha_1^{-1}(y))
=\lambda\alpha_2g(g^{-1}\alpha_2^{-1}(x)\circ_1g^{-1}\alpha_2^{-1}(y))
=\lambda\alpha_2(\alpha_2^{-1}(x)\circ_2\alpha_2^{-1}(y)) =x\cdot_{\aop_2,A} y.$$
Therefore the map $\ioamap_A$ induces a map $\ioamap_{A,\cong}$ on the set $\ioaset(A,\apr)/{\cong}$ of isomorphism classes of invertible $\calo$-operators on $(A,\apr)$.

Let $(A,\prec,\succ,\cdot)$ be a dendriform trialgebra. The
proof of Theorem~\mref{thm:ontov} shows that Eq.~(\mref{eq:denlr}) defines an $A$-bimodule $\bfk$-algebra
$(A, L_\succ, R_\prec,\spr)$ and an $\calo$-operator
$\alpha:=\id: (A,L_\succ, R_\prec,\cdot) \to (A,\apr)$
which is the identity on the underlying $\bfk$-module and hence is invertible. Since this $\alpha$ gives $\ioamap_A (\alpha)=(A,\succ, \prec,\cdot)$, we have proved that $\ioamap_A$, and hence $\ioamap_{A,\cong}$, is surjective.
Furthermore, let $\alpha_i:(R_i,\circ_i,\ell_i,r_i)\to (A,\apr)$ be two invertible $\calo$-operators such that
$\ioamap_A(\aop_1)=\ioamap_A(\aop_2)$. That is,
$$(A,\prec_{\alpha_1,A},\succ_{\alpha_1,A},\,\spr_{\alpha_1,A}) =(A,\prec_{\alpha_1,A},\succ_{\alpha_1,A},\,\spr_{\alpha_1,A}).$$
Define $g=\alpha_2^{-1}\alpha_1:R_1\to R_2$. {}For $x,y\in A$, from
$x\prec_{\alpha_1,A} y = x\prec_{\alpha_2,A} y$ we obtain
$$\alpha_1(\alpha_1^{-1}(x)r_1(y))
=\alpha_2(\alpha_2^{-1}(x)r_2(y)).$$ Then
$(\alpha_2^{-1}\alpha_1)(\alpha_1^{-1}(x)r_1(y))=\alpha_2^{-1}(x)r_2(y)$.
Thus for any $u_1\in R_1$, taking $x=\alpha_1(u)$, we have
$(\alpha_2^{-1}\alpha_1)(u_1r_1(y))
=(\alpha_2^{-1}\alpha_1)(u_1)r_2(y)$. By the same argument,
$(\alpha_2^{-1}\alpha_1)(\ell_1(x) v_1)=\ell_2(x)
(\alpha_2^{-1}\alpha_1)(v_1)$ for $x\in A, v_1\in R_1$. Thus
$\alpha_2^{-1}\alpha_1$ is an $A$-bimodule
homomorphism from $(R_1,\ell_1,r_1)$ to $(R_2,\ell_2,r_2)$.
Similarly, from $\spr_{\alpha_1,A}=\spr_{\alpha_2,A}$ we find that
$\alpha_2^{-1}\alpha_1$ is a $\bfk$-algebra homomorphism from
$(R_1,\rpr_1)$ to $(R_2,\rpr_2)$. Since $\alpha_2^{-1}\alpha_1$ is
also a bijection, we have proved that the $\calo$-operators
$\alpha_i:(R_i,\circ_i,\ell_i,r_i)\to (A,\apr), i=1,2,$ are
isomorphic by $g=\aop_2^{-1}\aop_1:A\to A$. Hence $\Phi_\cong$ is
also injective, proving Eq.~(\mref{eq:cong}).
\medskip

We next prove Eq.~(\mref{eq:sim}). Let $\alpha_i:(R_i,\circ_i,\ell_i,r_i)\to (A,\apr), i=1,2,$ be two equivalent invertible $\aop$-operators. Then
there exist a $\bfk$-algebra automorphism $f:A\rightarrow A$ and an isomorphism $g:(R_1,\circ_1,\ell_1 f^{-1},r_1 f^{-1})\rightarrow (R_2,\circ_2,\ell_2,r_2)$ of $A$-bimodule $\bfk$-algebras such that
$f \aop_1=\aop_2 g$.
Consider the corresponding dendriform trialgebras $$\ioamap_A(\alpha_1)= (A,\prec_{\alpha_1,A},\succ_{\alpha_1,A},\spr_{\alpha_1,A})
\quad \text{and} \quad
\ioamap_A(\alpha_2)= (A,\prec_{\alpha_2,A},\succ_{\alpha_2,A},\spr_{\alpha_2,A})$$ from Eq.~(\mref{eq:tdend2}). By the definition of $A$-bimodule isomorphisms, for $x,y\in A$, we have
\begin{eqnarray*}
f(x\succ_{\alpha_1,A} y) &=& f( \alpha_1(\ell_1(f^{-1}(f(x)))\alpha_1^{-1}(y))
\\
&=& f\big((f^{-1}\alpha_2 g)(g^{-1}\ell_2(f(x))g)(g^{-1}\alpha_2^{-1}f)(y)\big)
\\ &=& \alpha_2(\ell_2(f(x)))\alpha_2^{-1}(f(y))
\\
&=& f(x) \succ_{\alpha_2,A} f(y).
\end{eqnarray*}
Similarly,
$$  f(x\prec_{\alpha_1,A} y) = f(x) \prec_{\alpha_2,A} f(y).$$
Finally,
\begin{eqnarray*}
f(x \cdot_{\alpha_1,A} y) &=& f\big(\alpha_1\big(\lambda \alpha^{-1}_1(x) \circ_1 \alpha^{-1}_1(y)\big)\big) \\
&=& \lambda f\big((f^{-1}\alpha_2 g)((g^{-1}\alpha_2^{-1} f)(x) \circ_1 (g^{-1}\alpha_2^{-1} f)(y))\big)\\
&=& \lambda \alpha_2( \alpha_2^{-1}(f(x)) \circ_2 \alpha^{-1}(f(y)))\\
&=& f(x) \cdot_{\alpha_2,A} f(y).
\end{eqnarray*}
Hence the two dendriform trialgebras $\ioamap_A(\alpha_1)$ and $\ioamap_A(\alpha_2)$ are isomorphic through $f$.

Conversely, let $F:(A,\prec_1,\succ_1,\spr_1)\to (A,\prec_2,\succ_2,\spr_2)$ be an isomorphism of two dendriform trialgebras in ${\bf DT}(A)/{\cong}$. Since $\apr=\prec_1+\succ_1+\,\spr_1=\prec_2+\succ_2+\,\spr_2$ by definition, $F$ is also a $\bfk$-algebra automorphism of $(A,\apr)$. Let $\alpha_i:(R_i,\rpr_i,\ell_i,r_i)\to (A,\apr), i=1,2,$ be invertible $\calo$-operators such that $\ioamap_A(\alpha_i)=(A,\prec_i,\succ_i,\spr_i), i=1,2.$ To prove $\alpha_1\sim \alpha_2$ we only need to show that $g:=\alpha_2^{-1}f \alpha_1$ defines an isomorphism of $A$-bimodule $\bfk$-algebras from $(R_1,\rpr_1,\ell_1 F^{-1}, r_1 F^{-1})$ to $(R_2,\rpr_2,\ell_2,r_2)$. First, for $u\in R_1$ and $y\in A$, taking $x=\alpha_1(u)\in A$, we have
\begin{eqnarray*}
g(u(r_1F^{-1})(y))&=& \alpha_2^{-1}F\alpha_1\big(\alpha^{-1}(x)(r_1F^{-1})(y)\big) \\
&=& \alpha_2^{-1}F(x\prec_{\alpha_1,A} F^{-1}(y))\\
&=& \alpha_2^{-1}(F(x)\prec_{\alpha_2,A} y)\\
&=& \alpha_2^{-1}\big(\alpha_2(\alpha_2^{-1}(F(x))r_2(y))\big) \\
&=& (\alpha_2^{-1}F)(\alpha_1(u)) r_2(y)\\
&=& g(u)r_2(y).
\end{eqnarray*}
By the same argument, we have
$$ g((\ell_1F^{-1})(x) v)=\ell_2(x) g(v), \quad \forall x\in A,v\in R$$
and
$$ g(u \rpr_1 v)=g(u) \rpr_2 g(v), \quad \forall u,v\in R.$$
Since $g$ is also bijective, we have proved that $g$ is the isomorphism of $A$-bimodule $\bfk$-algebras that we want. This completes the proof.
\end{proof}


%
%

\end{document}